\numberwithin{equation}{section}
\theoremstyle{plain}
\newtheorem{prop}{Proposition}[section]
\newtheorem{lemma}[prop]{Lemma}
\theoremstyle{definition}
\newtheorem{remark}[prop]{Remark}
\newtheorem{example}[prop]{Example}
\newcommand{\RR}{{\mathbb{R}}}
\newcommand{\Ss}{{\mathbb{S}}}
\newcommand{\DD}{{\mathbb{D}}}
\newcommand{\cE}{{\cal E}}
\newcommand{\fa}{{\mathfrak a}}
\newcommand{\bF}{{\mathbf F}}
\newcommand{\bQ}{{\mathbf Q}}
\newcommand{\cO}{{\mathcal O}}
\newcommand\thin{\operatorname{thin}}
\newcommand\anti{\operatorname{anti}}
\newcommand\nc{\operatorname{nc}}
\newcommand\GL{\operatorname{GL}}
\newcommand\SL{\operatorname{SL}}
\newcommand\SO{\operatorname{SO}}
\newcommand\Spin{\operatorname{Spin}}
\newcommand\so{\operatorname{\mathfrak{so}}}
\newcommand\spin{\operatorname{\mathfrak{spin}}}
\newcommand\Diag{\operatorname{Diag}}
\newcommand\Quat{\operatorname{Quat}}
\newcommand\Lo{\operatorname{Lo}}
\newcommand\lo{{\mathfrak{lo}}}
\newcommand\Up{\operatorname{Up}}
\newcommand\Cl{\operatorname{Cl}}
\newcommand{\BL}{\operatorname{BL}}
\newcommand{\BLS}{\operatorname{BLS}}
\newcommand{\BLC}{\operatorname{BLC}}
\newcommand{\Bru}{\operatorname{Bru}}
\newcommand\inv{\operatorname{inv}}
\newcommand\sign{\operatorname{sign}}
\newcommand{\Block}{\operatorname{Block}}
\newcommand{\nmesmo}{\llbracket n \rrbracket}
\newcommand{\nmaisum}{\llbracket n+1 \rrbracket}
\newcommand{\medbd}{\makebox[4mm]{$\medblackdiamond$}}
\newcommand{\medwd}{\makebox[4mm]{$\meddiamond$}}
\newcommand{\smabc}{\makebox[4mm]{$\smallblackcircle$}}
\newcommand{\smawc}{\makebox[4mm]{$\smallcircle$}}
\newcommand\fl{{\mathfrak{l}}}
\begin{document}

\title{Remarks on the homotopy type of \\
intersections of two real Bruhat cells}
% [On the homotopy type of intersections of two real Bruhat cells]

\author{Em{\'\i}lia Alves
\footnote{emiliaalves@id.uff.br;
%Departamento de Matem\'atica, PUC-Rio and
% Departamento de Matem\'atica Aplicada,
Instituto de Matem\'atica e Estat{\'\i}stica, UFF}
\and Nicolau C. Saldanha
\footnote{saldanha@puc-rio.br,
Departamento de Matem\'atica, PUC-Rio}
}
\maketitle

\begin{abstract}

In~\cite{Alves-Saldanha} we introduce a stratification of intersections of a top dimensional real Bruhat cells with another arbitrary cell.
%More specifically, we introduce a stratification of top dimensional cell with an arbitrary cell.
This intersection is naturally identified with a subset of the lower triangular group $\Lo_{n+1}^1$: 
these subsets are labeled by elements of the finite group $\tilde{B}_{n+1}^+$,
the lift to the spin group of the intersection of the Coxeter group $B_{n+1}$ with $\SO_{n+1}$.
The stratification allows us to determine the homotopy type of the intersection.
In this work we give more examples, particularly detailing the computation of the intersection of two open Bruhat cells in general position for $n=4$. 
It turns out that all connected components are contractible. 
\end{abstract}

\section{Introduction}
Let $\GL_{n+1}$ denote the group of all real invertible
$(n+1) \times (n+1)$ matrices,
$\Up_{n+1} \subset \GL_{n+1}$ the subgroup of upper triangular matrices. 
Let $P_\sigma$ be the permutation matrix with entries
$(P_\sigma)_{i,i^{\sigma}} = 1$ and $0$ otherwise.

For a permutation $\sigma$ in the symmetric group $S_{n+1}$ 
of permutations of $\nmaisum=\{1, 2, \ldots, n, n+1 \}$,
define the Bruhat cell of $\sigma$ in $\GL_{n+1}$ as 
\[ \Bru_\sigma^{\GL} = \{ U_0 P_\sigma U_1;\;U_0, U_1 \in \Up_{n+1} \}
\subset \GL_{n+1}.\]

Let $\Lo_{n+1}^{1}$ denote the unipotent group
of lower triangular matrices with diagonal entries equal to $1$. 
We are interested in the sets 
\[\BL_\sigma = \Lo_{n+1}^1\cap \Bru_\sigma^{\GL}.\] 
The set $\BL_{\sigma}$ is homeomorphic to the intersection of two Bruhat cells, 
namely a top dimensional cell with an arbitrary one.

The study of the intersections of pairs of Bruhat/Schubert cells 
appears naturally in many areas beyond topology such as:
representation theory, singularity theory, Kazhdan-Lusztig theory~\cite{SSV4},~\cite{SSV3},~\cite{SV} and locally convex curves~\cite{Goulart-Saldanha0}. 
See~\cite{Alves-Saldanha} for a longer list of references.

There have been many advances in this topic. For instance, the 
number of connected components of the intersection of two big Bruhat cells in generic position is well known.
In other words: let $\eta$ denote the Coxeter element in the symmetric group $S_{n+1}$.
The number of connected components of $\BL_\eta$ is  $2, 6, 20, 52$ for $n = 1, 2, 3, 4$,
respectively; for $n \geq 5$, the number is 
$3 \cdot 2^n$, see~\cite{SSV1} and~\cite{SSV2}.
This problem of counting connected components can be interpreted 
as counting orbits for certain finite group of sympletic transvections 
acting on a finite-dimensional vector space:
the group and the vector space are uniquely determined by the pairs of Bruhat cells~\cite{Se},~\cite{SSVZ}.
It would be interesting to clarify whether it is possible 
to use the techniques developed in~\cite{SSV1},~\cite{SSV2},~\cite{Se} and~\cite{SSVZ}
to get information about
the low-dimensional homology groups for arbitrary intersections of pairs of Bruhat/Schubert cells.

In~\cite{Alves-Saldanha} we introduce a stratification of the sets $\BL_\sigma$ and 
with the techniques developed in that work we were able to compute the homotopy type of several examples.
In particular, we counted the number of connected components of the sets $\BL_\sigma$, 
for all permutation $\sigma \in S_{n+1}$ and $n = 2, 3, 4$: 
it turns out that all these connected components are contractible, but some long computations are required in the hardest cases.  
The aim of this work is to give more examples and technical remarks that we believe 
can illuminate the understanding of this topic.
In Section~\ref{section:codimtwo} we presents the computations for the hardest example for $n=4$.

\bigskip

We denote a permutation $\sigma$ in many notations.
We use the complete notation, 
a list of the values of $1^\sigma, 2^\sigma, \cdots, (n+1)^\sigma$ enclose in square brackets, for example $\sigma=[45132]$ satisfies 
$1^\sigma = 4$, $2^\sigma = 5$, $3^\sigma = 1$, $4^\sigma = 3$ and $5^\sigma=2$.
Another useful notation is to write $\sigma$ as product of cycles: $\sigma = (143)(25) = [45132]$.
The most important notation for us is to write $\sigma$ as a reduced word. 
Let us denote by $a_1, \cdots, a_n$ the standard Coxeter-Weyl generators of the symmetric
group $S_{n+1}$; where $a_i = (i, i + 1)$.
We denote by $\inv(\sigma)$ the number of inversions of $\sigma$.
A reduced word for $\sigma \in S_{n+1}$ is a product $\sigma = a_{i_1} \cdots a_{i_l}$, where
$\ell=\inv(\sigma)$ is the length of $\sigma$ in the generators $a_i$, $i \in \nmesmo=\{1, \ldots, n\}$.
For instance, we have $\sigma = [45132] = a_2a_1a_3a_2a_4a_3a_2 = a_2a_3a_1a_2a_4a_3a_2$; 
notice that usually there is more than one reduced word.   
A reduced word can be represented by a wiring diagram, as illustrated in Figure~\ref{fig:45132_a}.  
We read left to right and each cross is a generator.

\begin{figure}[ht!]
\centering
\resizebox{0.6\textwidth}{!}{

\begin{tikzpicture}[roundnode/.style={circle, draw=black, fill=black, minimum size=0.1mm,inner sep=2pt},
squarednode/.style={rectangle, draw=red!60, fill=red!5, very thick, minimum size=5mm},
bigdiamondnode/.style={draw,diamond, fill=black, minimum size=1mm,very thick,inner sep=4pt},
diamondnodew/.style={draw,diamond, fill=white, minimum size=1mm,very thick,inner sep=4pt},
bigroundnode/.style={circle, draw=black, fill=black, minimum size=0.1mm,very thick, inner sep=4pt},
bigroundnodew/.style={circle, draw=black, fill=white, minimum size=0.1mm,very thick, inner sep=4pt},]

	%%Line connecting Image 1 to Image 2
	%\draw[ultra thick] (-3,2.5) -- (9,2.5);
	
	%%Codes for Image 1
	\begin{scope}[shift={(-5.5,0)}]
		%\draw[ultra thick,fill=white] (0.5,0.5) -- (0.5,4.5) -- (5.5,4.5) -- (5.5,0.5) -- cycle;
				%Define nodes
		%First Column
		\node[roundnode] at (1,5) (n15) {};
		\node[roundnode] at (1,4) (n14) {};
		\node[roundnode] at (1,3) (n13) {};
		\node[roundnode] at (1,2) (n12) {};
		\node[roundnode] at (1,1) (n11) {};
		
		%Second, Third, Fourth, Fifth, Sixth, Seventh, Eighth,Nineth, Tenth   Columns (invisible nodes)
        \node at (2,5) (n25) {};		
		\node at (2,4) (n24) {};
		\node at (2,3) (n23) {};
		\node at (2,2) (n22) {};
		\node at (2,1) (n21) {};

		\node at (3,5) (n35) {};
		\node at (3,4) (n34) {};
		\node at (3,3) (n33) {};
		\node at (3,2) (n32) {};
		\node at (3,1) (n31) {};
		
        \node at (4,5) (n45) {};		
		\node at (4,4) (n44) {};
		\node at (4,3) (n43) {};
		\node at (4,2) (n42) {};
		\node at (4,1) (n41) {};

		\node at (5,5) (n55) {};		
		\node at (5,4) (n54) {};
		\node at (5,3) (n53) {};
		\node at (5,2) (n52) {};
		\node at (5,1) (n51) {};
		
		\node at (6,5) (n65) {};		
		\node at (6,4) (n64) {};
		\node at (6,3) (n63) {};
		\node at (6,2) (n62) {};
		\node at (6,1) (n61) {};

		\node at (7,5) (n75) {};		
		\node at (7,4) (n74) {};
		\node at (7,3) (n73) {};
		\node at (7,2) (n72) {};
		\node at (7,1) (n71) {};

		%Seventh Column
		
		\node[roundnode] at (8,5) (n85) {};
		\node[roundnode] at (8,4) (n84) {};
		\node[roundnode] at (8,3) (n83) {};
		\node[roundnode] at (8,2) (n82) {};
		\node[roundnode] at (8,1) (n81) {};
		
		%Draw Lines connecting the nodes
		\draw[very thick] (n15) -- (n25.center);
		\draw[very thick] (n14) -- (n23.center);
		\draw[very thick] (n13) -- (n24.center);
		\draw[very thick] (n12) -- (n22.center);
        \draw[very thick] (n11) -- (n51.center);		
        
        \draw[very thick] (n25.center) -- (n34.center);
        \draw[very thick] (n24.center) -- (n35.center);
        \draw[very thick] (n23.center) -- (n33.center);
        \draw[very thick] (n22.center) -- (n32.center);
        \draw[very thick] (n21.center) -- (n51.center);
        
        \draw[very thick] (n35.center) -- (n85.center);
        \draw[very thick] (n34.center) -- (n44.center);
        \draw[very thick] (n33.center) -- (n42.center);
        \draw[very thick] (n32.center) -- (n43.center);
        %\draw[very thick] (n21.center) -- (n51.center);
		
        %\draw[very thick] (n45.center) -- (n85.center);
        \draw[very thick] (n44.center) -- (n53.center);
        \draw[very thick] (n43.center) -- (n54.center);
        \draw[very thick] (n42.center) -- (n52.center);
    %    \draw[very thick] (n21.center) -- (n51.center);		

        \draw[very thick] (n54.center) -- (n74.center);
        
        \draw[very thick] (n53.center) -- (n63.center);
        
        \draw[very thick] (n51.center) -- (n62.center);	
        
        \draw[very thick] (n52.center) -- (n61.center);

        %\draw[very thick] (n65.center) -- (n75.center);
        
        \draw[very thick] (n64.center) -- (n74.center);
        
        \draw[very thick] (n63.center) -- (n72.center);			
        
        \draw[very thick] (n62.center) -- (n73.center);
        
        \draw[very thick] (n61.center) -- (n81.center);

       % \draw[very thick] (n75.center) -- (n105.center);
        
        \draw[very thick] (n74.center) -- (n83.center);	
        
        \draw[very thick] (n73.center) -- (n84.center);	        
        				
		\draw[very thick] (n72.center) -- (n82.center);

	\end{scope}
	
	%%Codes for Image 1
	\begin{scope}[shift={(5.5,0)}]
		%\draw[ultra thick,fill=white] (0.5,0.5) -- (0.5,4.5) -- (5.5,4.5) -- (5.5,0.5) -- cycle;
				%Define nodes
		%First Column
		\node[roundnode] at (1,5) (n15) {};
		\node[roundnode] at (1,4) (n14) {};
		\node[roundnode] at (1,3) (n13) {};
		\node[roundnode] at (1,2) (n12) {};
		\node[roundnode] at (1,1) (n11) {};
		
		%Second, Third, Fourth, Fifth, Sixth, Seventh, Eighth,Nineth, Tenth   Columns (invisible nodes)
        \node at (2,5) (n25) {};		
		\node at (2,4) (n24) {};
		\node at (2,3) (n23) {};
		\node at (2,2) (n22) {};
		\node at (2,1) (n21) {};

		\node at (3,5) (n35) {};
		\node at (3,4) (n34) {};
		\node at (3,3) (n33) {};
		\node at (3,2) (n32) {};
		\node at (3,1) (n31) {};
		
        \node at (4,5) (n45) {};		
		\node at (4,4) (n44) {};
		\node at (4,3) (n43) {};
		\node at (4,2) (n42) {};
		\node at (4,1) (n41) {};

		\node at (5,5) (n55) {};		
		\node at (5,4) (n54) {};
		\node at (5,3) (n53) {};
		\node at (5,2) (n52) {};
		\node at (5,1) (n51) {};
		
		\node at (6,5) (n65) {};		
		\node at (6,4) (n64) {};
		\node at (6,3) (n63) {};
		\node at (6,2) (n62) {};
		\node at (6,1) (n61) {};

		\node at (7,5) (n75) {};		
		\node at (7,4) (n74) {};
		\node at (7,3) (n73) {};
		\node at (7,2) (n72) {};
		\node at (7,1) (n71) {};

		%Seventh Column
		
		\node[roundnode] at (8,5) (n85) {};
		\node[roundnode] at (8,4) (n84) {};
		\node[roundnode] at (8,3) (n83) {};
		\node[roundnode] at (8,2) (n82) {};
		\node[roundnode] at (8,1) (n81) {};
		
		%Draw Lines connecting the nodes
		\draw[very thick] (n15) -- (n25.center);
		\draw[very thick] (n14) -- (n23.center);
		\draw[very thick] (n13) -- (n24.center);
		\draw[very thick] (n12) -- (n22.center);
        \draw[very thick] (n11) -- (n51.center);		
        
        \draw[very thick] (n25.center) -- (n35.center);
        \draw[very thick] (n24.center) -- (n34.center);
        \draw[very thick] (n23.center) -- (n32.center);
        \draw[very thick] (n22.center) -- (n33.center);
        %\draw[very thick] (n21.center) -- (n51.center);
        
        \draw[very thick] (n35.center) -- (n44.center);
        \draw[very thick] (n34.center) -- (n45.center);
        \draw[very thick] (n33.center) -- (n43.center);
        \draw[very thick] (n32.center) -- (n52.center);
        %\draw[very thick] (n21.center) -- (n51.center);
		
        \draw[very thick] (n45.center) -- (n85.center);
        \draw[very thick] (n44.center) -- (n53.center);
        \draw[very thick] (n43.center) -- (n54.center);
        \draw[very thick] (n42.center) -- (n52.center);
    %    \draw[very thick] (n21.center) -- (n51.center);		

        \draw[very thick] (n54.center) -- (n74.center);
        
        \draw[very thick] (n53.center) -- (n63.center);
        
        \draw[very thick] (n51.center) -- (n62.center);	
        
        \draw[very thick] (n52.center) -- (n61.center);

        %\draw[very thick] (n65.center) -- (n75.center);
        
        \draw[very thick] (n64.center) -- (n74.center);
        
        \draw[very thick] (n63.center) -- (n72.center);			
        
        \draw[very thick] (n62.center) -- (n73.center);
        
        \draw[very thick] (n61.center) -- (n81.center);

       % \draw[very thick] (n75.center) -- (n105.center);
        
        \draw[very thick] (n74.center) -- (n83.center);	
        
        \draw[very thick] (n73.center) -- (n84.center);	        
        				
		\draw[very thick] (n72.center) -- (n82.center);

	\end{scope}
\end{tikzpicture}}
\caption{Examples of reduced words for $\sigma = [45132] \in S_5$. On the left $a_2a_1a_3a_2a_4a_3a_2$, and on the right $a_2a_3a_1a_2a_4a_3a_2$.}
\label{fig:45132_a}
\end{figure}
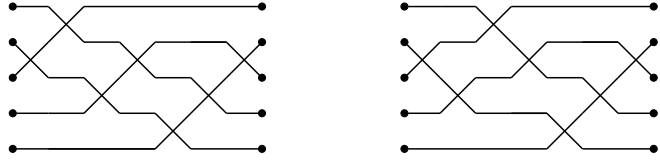	

An ancestry for a reduced word is obtained by marking crossings in the wiring diagram of $\sigma$
with black and white squares and disks. 
Alternatively, an ancestry is a sequence $\varepsilon \in \{\pm 1, \pm 2 \}^\ell$ (where $\ell = \inv(\sigma)$)
satisfying suitable conditions (see Sections 1, 4 and 7 in~\cite{Alves-Saldanha}).

\begin{figure}[ht!]
\centering
\resizebox{0.8\textwidth}{!}{
\begin{tikzpicture}[roundnode/.style={circle, draw=black, fill=black, minimum size=0.1mm,inner sep=2pt},
squarednode/.style={rectangle, draw=red!60, fill=red!5, very thick, minimum size=5mm},
bigdiamondnode/.style={draw,diamond, fill=black, minimum size=1mm,very thick,inner sep=4pt},
diamondnodew/.style={draw,diamond, fill=white, minimum size=1mm,very thick,inner sep=4pt},
bigdiamondnodew/.style={draw,diamond, fill=white, minimum size=1mm,very thick, inner sep=4pt},
bigroundnode/.style={circle, draw=black, fill=black, minimum size=0.1mm,very thick, inner sep=4pt},
bigroundnodew/.style={circle, draw=black, fill=white, minimum size=0.1mm,very thick, inner sep=4pt},]
\begin{scope}[shift={(-9.5,0)}]
		%\draw[ultra thick,fill=white] (0.5,0.5) -- (0.5,4.5) -- (5.5,4.5) -- (5.5,0.5) -- cycle;
				%Define nodes
		%First Column
		\node[roundnode] at (1,5) (n15) {};
		\node[roundnode] at (1,4) (n14) {};
		\node[roundnode] at (1,3) (n13) {};
		\node[roundnode] at (1,2) (n12) {};
		\node[roundnode] at (1,1) (n11) {};
		
		%Second, Third, Fourth, Fifth, Sixth, Seventh, Eighth,Nineth, Tenth   Columns (invisible nodes)
        \node at (2,5) (n25) {};		
		\node at (2,4) (n24) {};
		\node at (2,3) (n23) {};
		\node at (2,2) (n22) {};
		\node at (2,1) (n21) {};

		\node at (3,5) (n35) {};
		\node at (3,4) (n34) {};
		\node at (3,3) (n33) {};
		\node at (3,2) (n32) {};
		\node at (3,1) (n31) {};
		
        \node at (4,5) (n45) {};		
		\node at (4,4) (n44) {};
		\node at (4,3) (n43) {};
		\node at (4,2) (n42) {};
		\node at (4,1) (n41) {};

		\node at (5,5) (n55) {};		
		\node at (5,4) (n54) {};
		\node at (5,3) (n53) {};
		\node at (5,2) (n52) {};
		\node at (5,1) (n51) {};
		
		\node at (6,5) (n65) {};		
		\node at (6,4) (n64) {};
		\node at (6,3) (n63) {};
		\node at (6,2) (n62) {};
		\node at (6,1) (n61) {};

		\node at (7,5) (n75) {};		
		\node at (7,4) (n74) {};
		\node at (7,3) (n73) {};
		\node at (7,2) (n72) {};
		\node at (7,1) (n71) {};

		%Seventh Column
		
		\node[roundnode] at (8,5) (n85) {};
		\node[roundnode] at (8,4) (n84) {};
		\node[roundnode] at (8,3) (n83) {};
		\node[roundnode] at (8,2) (n82) {};
		\node[roundnode] at (8,1) (n81) {};
		
		%Draw Lines connecting the nodes
		\draw[very thick] (n15) -- (n25.center);
		\draw[very thick] (n14) -- (n23.center);
		\draw[very thick] (n13) -- (n24.center);
		\draw[very thick] (n12) -- (n22.center);
        \draw[very thick] (n11) -- (n51.center);		
        
        \draw[very thick] (n25.center) -- (n35.center);
        \draw[very thick] (n24.center) -- (n34.center);
        \draw[very thick] (n23.center) -- (n32.center);
        \draw[very thick] (n22.center) -- (n33.center);
        %\draw[very thick] (n21.center) -- (n51.center);
        
        \draw[very thick] (n35.center) -- (n44.center);
        \draw[very thick] (n34.center) -- (n45.center);
        \draw[very thick] (n33.center) -- (n43.center);
        \draw[very thick] (n32.center) -- (n52.center);
        %\draw[very thick] (n21.center) -- (n51.center);
		
        \draw[very thick] (n45.center) -- (n85.center);
        \draw[very thick] (n44.center) -- (n53.center);
        \draw[very thick] (n43.center) -- (n54.center);
        \draw[very thick] (n42.center) -- (n52.center);
    %    \draw[very thick] (n21.center) -- (n51.center);		

        \draw[very thick] (n54.center) -- (n74.center);
        
        \draw[very thick] (n53.center) -- (n63.center);
        
        \draw[very thick] (n51.center) -- (n62.center);	
        
        \draw[very thick] (n52.center) -- (n61.center);

        %\draw[very thick] (n65.center) -- (n75.center);
        
        \draw[very thick] (n64.center) -- (n74.center);
        
        \draw[very thick] (n63.center) -- (n72.center);			
        
        \draw[very thick] (n62.center) -- (n73.center);
        
        \draw[very thick] (n61.center) -- (n81.center);

       % \draw[very thick] (n75.center) -- (n105.center);
        
        \draw[very thick] (n74.center) -- (n83.center);	
        
        \draw[very thick] (n73.center) -- (n84.center);	        
        				
		\draw[very thick] (n72.center) -- (n82.center);	
		
			\node[bigroundnodew] at (1.5,3.5) (m1) {};
		\node[bigroundnodew] at (3.5,4.5) (m2) {};
		\node[bigroundnode] at (2.5,2.5) (m3) {};
		\node[bigroundnode] at (4.5,3.5) (m4) {};
		\node[bigroundnode] at (5.5,1.5) (m5) {};
		\node[bigroundnode] at (6.5,2.5) (m6) {};
		\node[bigroundnodew] at (7.5,3.5) (m7) {};

	\end{scope}

	%%Codes for Image 1
		\begin{scope}[shift={(-0.5,0)}]
		%\draw[ultra thick,fill=white] (0.5,0.5) -- (0.5,4.5) -- (5.5,4.5) -- (5.5,0.5) -- cycle;
				%Define nodes
		%First Column
		\node[roundnode] at (1,5) (n15) {};
		\node[roundnode] at (1,4) (n14) {};
		\node[roundnode] at (1,3) (n13) {};
		\node[roundnode] at (1,2) (n12) {};
		\node[roundnode] at (1,1) (n11) {};
		
		%Second, Third, Fourth, Fifth, Sixth, Seventh, Eighth,Nineth, Tenth   Columns (invisible nodes)
        \node at (2,5) (n25) {};		
		\node at (2,4) (n24) {};
		\node at (2,3) (n23) {};
		\node at (2,2) (n22) {};
		\node at (2,1) (n21) {};

		\node at (3,5) (n35) {};
		\node at (3,4) (n34) {};
		\node at (3,3) (n33) {};
		\node at (3,2) (n32) {};
		\node at (3,1) (n31) {};
		
        \node at (4,5) (n45) {};		
		\node at (4,4) (n44) {};
		\node at (4,3) (n43) {};
		\node at (4,2) (n42) {};
		\node at (4,1) (n41) {};

		\node at (5,5) (n55) {};		
		\node at (5,4) (n54) {};
		\node at (5,3) (n53) {};
		\node at (5,2) (n52) {};
		\node at (5,1) (n51) {};
		
		\node at (6,5) (n65) {};		
		\node at (6,4) (n64) {};
		\node at (6,3) (n63) {};
		\node at (6,2) (n62) {};
		\node at (6,1) (n61) {};

		\node at (7,5) (n75) {};		
		\node at (7,4) (n74) {};
		\node at (7,3) (n73) {};
		\node at (7,2) (n72) {};
		\node at (7,1) (n71) {};

		%Seventh Column
		
		\node[roundnode] at (8,5) (n85) {};
		\node[roundnode] at (8,4) (n84) {};
		\node[roundnode] at (8,3) (n83) {};
		\node[roundnode] at (8,2) (n82) {};
		\node[roundnode] at (8,1) (n81) {};
		
		%Draw Lines connecting the nodes
		\draw[very thick] (n15) -- (n25.center);
		\draw[very thick] (n14) -- (n23.center);
		\draw[very thick] (n13) -- (n24.center);
		\draw[very thick] (n12) -- (n22.center);
        \draw[very thick] (n11) -- (n51.center);		
        
        \draw[very thick] (n25.center) -- (n35.center);
        \draw[very thick] (n24.center) -- (n34.center);
        \draw[very thick] (n23.center) -- (n32.center);
        \draw[very thick] (n22.center) -- (n33.center);
        %\draw[very thick] (n21.center) -- (n51.center);
        
        \draw[very thick] (n35.center) -- (n44.center);
        \draw[very thick] (n34.center) -- (n45.center);
        \draw[very thick] (n33.center) -- (n43.center);
        \draw[very thick] (n32.center) -- (n52.center);
        %\draw[very thick] (n21.center) -- (n51.center);
		
        \draw[very thick] (n45.center) -- (n85.center);
        \draw[very thick] (n44.center) -- (n53.center);
        \draw[very thick] (n43.center) -- (n54.center);
        \draw[very thick] (n42.center) -- (n52.center);
    %    \draw[very thick] (n21.center) -- (n51.center);		

        \draw[very thick] (n54.center) -- (n74.center);
        
        \draw[very thick] (n53.center) -- (n63.center);
        
        \draw[very thick] (n51.center) -- (n62.center);	
        
        \draw[very thick] (n52.center) -- (n61.center);

        %\draw[very thick] (n65.center) -- (n75.center);
        
        \draw[very thick] (n64.center) -- (n74.center);
        
        \draw[very thick] (n63.center) -- (n72.center);			
        
        \draw[very thick] (n62.center) -- (n73.center);
        
        \draw[very thick] (n61.center) -- (n81.center);

       % \draw[very thick] (n75.center) -- (n105.center);
        
        \draw[very thick] (n74.center) -- (n83.center);	
        
        \draw[very thick] (n73.center) -- (n84.center);	        
        				
		\draw[very thick] (n72.center) -- (n82.center);	
		
			\node[bigdiamondnode] at (1.5,3.5) (m1) {};
		\node[bigroundnode] at (3.5,4.5) (m2) {};
		\node[bigroundnode] at (2.5,2.5) (m3) {};
		\node[bigdiamondnodew] at (4.5,3.5) (m4) {};
		\node[bigroundnodew] at (5.5,1.5) (m5) {};
		\node[bigroundnode] at (6.5,2.5) (m6) {};
		\node[bigroundnode] at (7.5,3.5) (m7) {};

	\end{scope}
	
	%%Codes for Image 1
	\begin{scope}[shift={(8.5,0)}]
		%\draw[ultra thick,fill=white] (0.5,0.5) -- (0.5,4.5) -- (5.5,4.5) -- (5.5,0.5) -- cycle;
				%Define nodes
		%First Column
		\node[roundnode] at (1,5) (n15) {};
		\node[roundnode] at (1,4) (n14) {};
		\node[roundnode] at (1,3) (n13) {};
		\node[roundnode] at (1,2) (n12) {};
		\node[roundnode] at (1,1) (n11) {};
		
		%Second, Third, Fourth, Fifth, Sixth, Seventh, Eighth,Nineth, Tenth   Columns (invisible nodes)
        \node at (2,5) (n25) {};		
		\node at (2,4) (n24) {};
		\node at (2,3) (n23) {};
		\node at (2,2) (n22) {};
		\node at (2,1) (n21) {};

		\node at (3,5) (n35) {};
		\node at (3,4) (n34) {};
		\node at (3,3) (n33) {};
		\node at (3,2) (n32) {};
		\node at (3,1) (n31) {};
		
        \node at (4,5) (n45) {};		
		\node at (4,4) (n44) {};
		\node at (4,3) (n43) {};
		\node at (4,2) (n42) {};
		\node at (4,1) (n41) {};

		\node at (5,5) (n55) {};		
		\node at (5,4) (n54) {};
		\node at (5,3) (n53) {};
		\node at (5,2) (n52) {};
		\node at (5,1) (n51) {};
		
		\node at (6,5) (n65) {};		
		\node at (6,4) (n64) {};
		\node at (6,3) (n63) {};
		\node at (6,2) (n62) {};
		\node at (6,1) (n61) {};

		\node at (7,5) (n75) {};		
		\node at (7,4) (n74) {};
		\node at (7,3) (n73) {};
		\node at (7,2) (n72) {};
		\node at (7,1) (n71) {};

		%Seventh Column
		
		\node[roundnode] at (8,5) (n85) {};
		\node[roundnode] at (8,4) (n84) {};
		\node[roundnode] at (8,3) (n83) {};
		\node[roundnode] at (8,2) (n82) {};
		\node[roundnode] at (8,1) (n81) {};
		
		%Draw Lines connecting the nodes
		\draw[very thick] (n15) -- (n25.center);
		\draw[very thick] (n14) -- (n23.center);
		\draw[very thick] (n13) -- (n24.center);
		\draw[very thick] (n12) -- (n22.center);
        \draw[very thick] (n11) -- (n51.center);		
        
        \draw[very thick] (n25.center) -- (n35.center);
        \draw[very thick] (n24.center) -- (n34.center);
        \draw[very thick] (n23.center) -- (n32.center);
        \draw[very thick] (n22.center) -- (n33.center);
        %\draw[very thick] (n21.center) -- (n51.center);
        
        \draw[very thick] (n35.center) -- (n44.center);
        \draw[very thick] (n34.center) -- (n45.center);
        \draw[very thick] (n33.center) -- (n43.center);
        \draw[very thick] (n32.center) -- (n52.center);
        %\draw[very thick] (n21.center) -- (n51.center);
		
        \draw[very thick] (n45.center) -- (n85.center);
        \draw[very thick] (n44.center) -- (n53.center);
        \draw[very thick] (n43.center) -- (n54.center);
        \draw[very thick] (n42.center) -- (n52.center);
    %    \draw[very thick] (n21.center) -- (n51.center);		

        \draw[very thick] (n54.center) -- (n74.center);
        
        \draw[very thick] (n53.center) -- (n63.center);
        
        \draw[very thick] (n51.center) -- (n62.center);	
        
        \draw[very thick] (n52.center) -- (n61.center);

        %\draw[very thick] (n65.center) -- (n75.center);
        
        \draw[very thick] (n64.center) -- (n74.center);
        
        \draw[very thick] (n63.center) -- (n72.center);			
        
        \draw[very thick] (n62.center) -- (n73.center);
        
        \draw[very thick] (n61.center) -- (n81.center);

       % \draw[very thick] (n75.center) -- (n105.center);
        
        \draw[very thick] (n74.center) -- (n83.center);	
        
        \draw[very thick] (n73.center) -- (n84.center);	        
        				
		\draw[very thick] (n72.center) -- (n82.center);	
		
			\node[bigdiamondnode] at (1.5,3.5) (m1) {};
		\node[bigroundnode] at (3.5,4.5) (m2) {};
		\node[bigdiamondnode] at (2.5,2.5) (m3) {};
		\node[bigroundnode] at (4.5,3.5) (m4) {};
		\node[bigroundnodew] at (5.5,1.5) (m5) {};
		\node[bigdiamondnodew] at (6.5,2.5) (m6) {};
		\node[bigdiamondnodew] at (7.5,3.5) (m7) {};

	\end{scope}
\end{tikzpicture}}
\caption{Examples of ancestries for the reduced word $a_2a_3a_1a_2a_4a_3a_2$ for $\sigma = [45132] \in S_5$:
an ancestry $\varepsilon_0=(+1,-1,+1,-1,-1,-1,+1)$, $\varepsilon_1=(-2,-1,-1,+2,+1,-1,-1)$ 
and $\varepsilon_2=(-2,-1,-1,+2,+1,-1,-1)$ with $\dim(\varepsilon_i)=i$, for $i=0, 1, 2$.}
\label{fig:45132_b}
\end{figure}
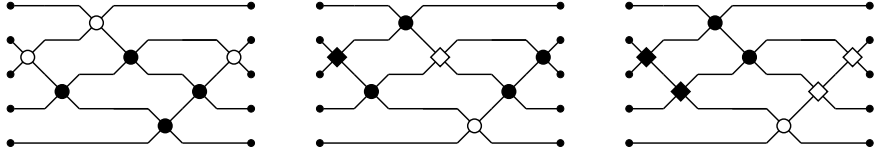	

For each ancestry $\varepsilon$ we define a stratum $\BLS_\varepsilon \subset \BL_\sigma$
that is a smooth contractible submanifold of codimension
\[d = \dim (\varepsilon) = |\{k \,;\, \varepsilon(k) =-2 \}|=|\{k \,;\, \varepsilon(k) =+2  \}|.\]
Further, $\BLS_\varepsilon \subset \BL_\sigma$ is diffeomorphic to $\RR^{\ell-d}$.
Beyond for distinct ancestries the respective strata are disjoint and their union over all ancestries 
is the whole $\BL_\sigma$.
The sets $\BL_\sigma$ are homotopically equivalent to a finite
CW complex $\BLC_\sigma$, with one cell of codimension $d$ for each ancestry of dimension $d$.

\section{Preliminaries}
\label{section:preliminaries}

In this section we briefly review some concepts and results from previous works, 
especially~\cite{Alves-Saldanha} and \cite{Goulart-Saldanha0}.

The spin group $\Spin_{n+1}$ is the double universal cover of $\SO_{n+1}$ and comes with a natural projection $\Pi: \Spin_{n+1} \rightarrow \SO_{n+1}$.
Let $B_{n+1}$ be the Coxeter-Weyl group of signed permutation matrices and set $B_{n+1}^+ = B_{n+1} \cap \SO_{n+1}$. 
Denote by $\Diag_{n+1}^+ \subset B_{n+1}^+$ the normal subgroup of diagonal matrices. 
Defining $\tilde{B}_{n+1}^+ = \Pi^{-1}[B_{n+1}^+]$ and $\Quat_{n+1} = \Pi^{-1}[\Diag_{n+1}^+]$, we have the exact sequences 
\[1 \rightarrow \Quat_{n+1} \rightarrow \tilde{B}_{n+1}^+ \rightarrow S_{n+1} \rightarrow 1, \quad 1 \rightarrow \{ \pm 1 \} \rightarrow \Quat_{n+1} \rightarrow \Diag_{n+1}^+ \rightarrow 1.\]

Let $\fa_i$ be the $(n+1) \times (n+1)$ real skew symmetric matrix with the only non zero entries
being $(\fa_i)_{i+1,i} = 1$ and $(\fa_i)_{i,i+1} = -1$.
Set $\alpha_i(\theta) = \exp(\theta \fa_i)$, the one parameter subgroup $\alpha_i: \RR \to \SO_{n+1}$.
More explicit, $\alpha_i:  \RR \to \SO_{n+1}$ is given by
\[
\alpha_i(\theta) =
\begin{pmatrix} I & & & \\
& \cos(\theta) & -\sin(\theta) & \\
& \sin(\theta) & \cos(\theta) & \\
& & & I \end{pmatrix},\]
where the central block ocupies rows and columns $i$ and $i+1$.
Since $\spin_{n+1}$ and $\so_{n+1}$ are isomorphic we denote by the same symbol 
the one parameter subgroup $\alpha_i: \RR \to \Spin_{n+1}$.
Set $\acute a_j = \alpha_j(\pi/2)$, $\grave a_j = (\acute a_j)^{-1} \in \tilde{B}_{n+1}^+$, 
the elements $\acute a_j$ are generators of the group $ \tilde{B}_{n+1}^+$.
The elements $\hat a_j = (\acute a_j)^2 \in \Quat_{n+1}$, are the generators of the group $\Quat_{n+1}$.

We interpret the spin group $\Spin_{n+1}$ as a subset of the Clifford algebra $\Cl_{n+1}^0$.
This is the subalgebra $\Cl_{n+1}^0 \subset \Cl_{n+1}$ of the even elements of the Clifford algebra $\Cl_{n+1}$.
As in~\cite{Goulart-Saldanha0} and~\cite{Alves-Saldanha} we refer to $\Cl_{n+1}^0$ as the Clifford algebra.
The Clifford algebra $\Cl_{n+1}^0$ is generated by the elements $\hat a_1, \ldots, \hat a_n$.
A basis of the Clifford algebra as a real vector space is
\[ 1, \; \hat a_1, \; \hat a_2, \; \hat a_1 \hat a_2, \; \hat a_3, \; \hat a_1 \hat a_3, \; \hat a_2 \hat a_3, \; \hat a_1 \hat a_2 \hat a_3, \; \hat a_4, \ldots ; \]
this basis is orthonormal for the usual inner product in $\Cl_{n+1}^0$.
For $z \in \Cl_{n+1}^0$ we denote by $\Re(z) = \langle 1,z \rangle \in \RR$ its real part.
In this notation, we have $\alpha_i(\theta) = \cos\left(\frac{\theta}{2}\right) + \sin\left(\frac{\theta}{2}\right) \hat a_i$.

For $\sigma \in S_{n+1}$, let $\Bru_\sigma \subset \Spin_{n+1}$ be the set of $z \in \Spin_{n+1}$ 
such that there exist $U_0, U_1 \in \Up_{n+1}$ with $\Pi(z)=U_0 P_\sigma U_1$, 
where $P_\sigma$ is the permutation matrix.
Each connected component of $\Bru_\sigma$ contains exactly one element of $\tilde{B}_{n+1}^+$;
if $z \in \tilde{B}_{n+1}^+$ belong to a connected component of $\Bru_\sigma$, 
we denote that component by $\Bru_z \subset \Spin_{n+1}$. 
The smooth map $\bQ: \Lo_{n+1}^1 \rightarrow \Spin_{n+1}$ is defined by
$\bQ(I)=1$ and $L = \Pi(\bQ(L))R$, $R \in \Up_{n+1}^+$.
The (possibly empty) subsets $\BL_z \subset \Lo_{n+1}^1$ are defined by $\BL_z = \bQ^{-1}[\Bru_z]$.
The sets $\BL_\sigma$ are partitioned into subsets $\BL_z$:
\begin{equation}
\label{equation:BLz}
\BL_\sigma = \bigsqcup_{z \in \acute\sigma \Quat_{n+1}} \BL_z.
\end{equation}
Here and in~\cite{Alves-Saldanha} we discuss the homotopy type of the sets $\BL_z$:
for $n \leq 4$, the connected components of $\BL_z$ are contractible. 
For this, we constructed here and in~\cite{Alves-Saldanha} a finite stratification of $\BL_z$:
we also obtain a CW-complex $\BLC_z$ with the same homotopy type as $\BL_z$. 
Equation~\eqref{theo:N} below (based on Theorem 4 from~\cite{Alves-Saldanha}) 
gives a formula to count the number of strata with codimension $0$.
More precisely, given $z \in \acute{\sigma} \Quat_{n+1}$ the equation gives a formula for the number $N(z)$
of ancestries $\varepsilon$ with $\dim(\varepsilon)=0$ such that $\BLS_\varepsilon \subseteq \BL_z$.
In particular, it allows us to identify the few special cases when $\BL_z$ is empty.

Following~\cite{Alves-Saldanha}, a permutation $\sigma \in S_{n+1}$ {\em blocks at the entry $k$},
$1 \leq k \leq n $,
if and only if for all $j$,
$j \le k$ implies $j^\sigma \le k$.
Given $\sigma$, let $\Block(\sigma)$
be the set of values of $j$ such that $\sigma$ blocks at $j$. 
Observe that, given a subset  $B \subseteq \nmesmo$, the set $H_B$ of all permutations $\sigma$
such that $\Block(\sigma) \supseteq B$
is the subgroup of $S_{n+1}$ generated by $a_i$, $i \notin B$.
Denote by $\tilde H_B \subseteq \tilde B_{n+1}^{+}$
the subgroup generated by  $\acute a_i$,
$i \in \nmesmo \smallsetminus B$.

Given $\sigma \in S_{n+1}$ and $z \in \acute \sigma \Quat_{n+1} \subset \tilde B_{n+1}^{+}$,
set $\ell = \inv(\sigma)$, $B = \Block(\sigma) \subseteq \nmesmo$, and 
$b = |B|$. In the above notation, 
if $z \notin \tilde H_B$ then $N(z) = N(-z) = 0$;
otherwise
\begin{equation}
N(z) = 2^{\ell-n+b-1} + 2^{\frac{\ell}{2}-1}\,\Re(z). 
\label{theo:N}
\end{equation}
This equation is a restatament of Theorem 4 of~\cite{Alves-Saldanha}.

We denote by $\Diag_{n+1}$ the group of diagonal matrices with diagonal entries in $\{\pm 1\}$.
The quotient $\cE_n = \Diag_{n+1}/\{\pm I\}$ 
is naturally isomorphic to $\{ \pm 1\}^{\nmesmo}$: 
take $D \in \Diag_{n+1}$ to $E \in \{ \pm 1\}^{\nmesmo}$ with 
$E_i=D_{i,i}D_{i+1,i+1}$. 
The group $\Diag_{n+1}$ acts by conjugations in $\SO_{n+1}$: since $-I$ acts trivially, 
this can be considered an action of $\cE_n$.
This action can be lifted to $\Spin_{n+1}$ and then extend to $\Cl_{n+1}^0$.
Specifically, each $E \in \cE_n$ defines automorphisms of $\Spin_{n+1}$ and $\Cl_{n+1}^0$ by
\[ (\hat a_i)^E = E_i \hat a_i, \quad (\alpha_i(\theta))^E = \alpha_i(E_i \theta).  \]
For $z \in \Spin_{n+1}$ we denote by $\cE_z$ the isotropy group of $z$, i.e.,
\[ \cE_z = \{ E \in \cE_n \;|\; z^{E} = z \}. \]

Recall that a set $X \subseteq \nmaisum$ is $\sigma$-invariant
if and only if $X^\sigma = X$,
where $X^\sigma = \{x^\sigma, x \in X\}$.
This happens
if and only if $X$ is a disjoint union of cycles of $\sigma$.
Given $\sigma \in S_{n+1}$,
there exist $2^c$ $\sigma$-invariant sets $X \subseteq \nmaisum$,
where $c = \nc(\sigma)$ is the number of cycles of
the permutation $\sigma$.

\begin{remark}
\label{remark_orbits}
Consider $\sigma \in S_{n+1}$, $z_0 \in \acute{\sigma} \Quat_{n+1}$
and $Q_0 = \Pi(z_0) \in B_{n+1}^+$ (for $\Pi: \Spin_{n+1} \to \SO_{n+1}$).
The orbit $\cO_{Q_0}$ of ${Q_0}$
has cardinality $2^{n-c+1}$.
Concerning the action of $\cE_n$ on $ \acute{\sigma} \Quat_{n+1}$,
there are two possibilities for the size of the orbit $\cO_{z_0}$.
If there exists $E \in \cE_n$
with $z_0^{E} = -z_0$, we set $c_{\anti}(z_0) = 1$;
otherwise $c_{\anti}(z_0) = 0$.
If $c_{\anti}(z_0) = 1$ the orbit $\cO_{z_0}$ is $\Pi^{-1}[\cO_{Q_0}]$,
with cardinality $2^{n-c+2}$.
If $c_{\anti}(z_0) = 0$
the orbits $\cO_{z_0}$ and $\cO_{-z_0}$ are disjoint,
with cardinality $2^{n-c+1}$ and with union $\Pi^{-1}[\cO_{Q_0}]$;
we then say the orbit splits. 
\end{remark}

%%%%%%%%%%%%%%%%%%%%%%%%%%%%%%%%%%%%
\section{Strata of codimension zero, one or two}
\label{section:codimone}

In this section we describe the strata of codimension $0$, $1$ or $2$.
This are equivalent to cells of dimension $0$, $1$ or $2$, for which we describe glueing instructions.
This allows us to compute
the connected components of $\BL_\sigma$ or $\BL_z$. \\

We denote by $\lo_{n+1}^1$ the Lie algebra of $\Lo_{n+1}^1$,
i.e., the set of strictly lower triangular matrices.
Let $\fl_j \in \lo_{n+1}^1$, be the matrix whose only nonzero entry is 
$(\fl_j)_{j+1,j} = 1$, for $j \in \nmesmo$.
Denote by $\lambda_j(t)$ its one parameter subgroup, i.e., $\lambda_j(t) = \exp(t \fl_j) \in \Lo_{n+1}^1$.

Given a reduced word $\sigma = a_{i_1} \cdots a_{i_\ell} \in S_{n+1}$ where 
$\ell = \inv(\sigma)$,  
consider the product
\begin{equation}
\label{equation:Lproduct}
 L = \lambda_{i_1}(t_1) \cdots \lambda_{i_\ell}(t_\ell).
\end{equation}
It is well known that if $L \in \BL_{\sigma}$ 
can be written as in  \eqref{equation:Lproduct} then the vector
$(t_1, \ldots, t_\ell)$ is unique~\cite{Berenstein-Fomin-Zelevinsky}.
Also, for almost all $L \in \BL_\sigma \subset \Lo_{n+1}^1$,
there exists a vector $(t_1, \ldots, t_\ell) \in (\RR \smallsetminus \{0\})^\ell$
for which  \eqref{equation:Lproduct} holds. 
If $\varepsilon$ is an ancestry of dimension $0$, then, by definition, 
\[\BL_{\varepsilon} = \{ \lambda_{i_1}(t_1) \cdots \lambda_{i_\ell}(t_\ell);\;
\sign(t_j) = \varepsilon(j) \}
\subseteq \BL_z,\]
where $z=(\acute a_{i_1})^{\sign(\varepsilon(1))} \cdots (\acute a_{i_\ell})^{\sign(\varepsilon(\ell))}$.

\begin{example}
\label{example:45132_definition}

We can take $n=4$ and $\sigma=[45132]$. Let us fix the reduced word $\sigma=a_2a_3a_1a_2a_4a_3a_2$.
If $\varepsilon$ is an ancestry of dimension $0$, then the matrices $L \in \BL_\varepsilon$ can be written as the following product
\begin{equation}
L = \lambda_2(t_1) \lambda_3(t_2) \lambda_1(t_3) \lambda_2(t_4) \lambda_4(t_5) \lambda_3(t_6) \lambda_2(t_7); 
\end{equation}
where $\sign(t_j) = \varepsilon(j)$. 
For instance
\[ L_0 = \begin{pmatrix}
1 & 0 & 0 & 0 & 0 \\ -3 & 1 & 0 & 0 & 0 \\
-3 & -3/2 & 1 & 0 & 0 \\ 0 & -7 & 3 & 1 & 0 \\ 0 & 4 & -2 & -2 & 1
\end{pmatrix} \in \BL_{\varepsilon_0} \subset \BL_{z_0}, \]
where $\varepsilon_0=(+1, +1, -1, -1, -1, +1, -1)$ and then $z_0=\acute a_2 \acute a_3 \grave a_1 \grave a_2 \grave a_4 \acute a_3 \grave a_2$.
\end{example}

But not all matrices in $\BL_\sigma$ can be written as a product of lower matrices given above.
In what follows we will describe how to determine the ancestry a given matrix in $\BL_\sigma$ corresponds to.
As usual, assume $\sigma \in S_{n+1}$ and a reduced word
$\sigma = a_{i_1}\cdots a_{i_\ell}$ to be fixed.
As we saw in~\cite{Alves-Saldanha}, a matrix $L \in \BL_\sigma$ can be identified with $\tilde{z_\ell}=\bQ(L) \in \grave \eta \Bru_{\acute \eta}$ and
with a sequence $(z_k)_{0 \le k \le \ell}$ of elements of $\Spin_{n+1}$
with $z_0 = 1$, $z_k = z_{k-1} \alpha_{i_k}(\theta_k)$,
$\theta_k \in (0,\pi)$, 
$z_\ell = \tilde{z}_\ell q_\ell \in \Bru_{\acute\sigma} \cap (\grave\eta\Bru_{\eta})$, $q_\ell \in \Quat_{n+1}$.
The values of $\theta_k$ are smooth functions of $L \in \BL_\sigma$.
Define $\varrho_k \in \tilde{B}_{n+1}^+$ such that $z_k \in \Bru_{\varrho_k}$.
For all $k$ we have $\varrho_k=\varrho_{k-1} (\acute a_{i_k})^{\xi(k)}$, $\xi(k) \in \{0, 1, 2\}$.
The function $\xi: \{1, \ldots, \ell \} \to \{0, 1, 2\}$ is a corresponding ancestry.\\

We now recall the $\varepsilon$ notation for ancestries.
Let $\rho_k = \Pi(\varrho_k) \in S_{n+1}$, for $\Pi: B_{n+1}^+ \to S_{n+1}$.
For each $k$, define $\tilde{z}_k \in \grave \eta \Bru_{\acute \rho_k}$ by $z_k=\tilde{z}_k q_k, q_k \in \Quat_{n+1}$. 
Define $\tilde \theta_k \in (-\pi, \pi)$ be defined by $\tilde{z}_k=\tilde{z}_{k-1} \alpha_{i_k}(\tilde{\theta}_k)$.
The function $\varepsilon: \{1, \ldots, \ell \} \to \{\pm 1, \pm 2 \}$ is defined by $\sign(\varepsilon(k))=\sign(\tilde{\theta}_k)$ and
$| \varepsilon(k) | = 2$ if and only if $\rho_k \ne \rho_{k-1}$.
The functions  $\xi$ and $\varepsilon$ are alternative descriptions for an ancestry.
Equations (7.1) and (7.2) in~\cite{Alves-Saldanha} teach us how to translate from one notation to the other. \\
%The notation $\varepsilon$ is used in \\

Assume $L \in \BL_{\xi} \subset \BL_{\sigma}$,
where $\xi$ is an ancestry of positive dimension $d$.
We can construct a transversal section to $\BL_{\xi}$
by keeping fixed the values of $\theta_k$ if
either $\xi(k) \ne 1$ or $\varrho_k \ge \varrho_{k-1}$.
There are $d$ values of $k$ for which
$\xi(k) = 1$ and $\varrho_k < \varrho_{k-1}$:
for these values of $k$ we allow the coordinates $\theta_k$
to vary freely (and independently)
in a small neighborhood of their original values.
We must then determine the ancestries of the perturbed strata.
An understanding of a transversal section yields
a description of the boundary map.

\begin{lemma}
\label{lemma:codimone}
If $k_1, k_2$ satisfy
\begin{equation}
\label{equation:k1k2}
i_{k_1} = i_{k_2}, \qquad
\forall k, (k_1 < k < k_2) \to (i_k \ne i_{k_1}),
\end{equation}
then
any function $\xi: \llbracket \ell \rrbracket \to \{0,1,2\}$
with $\xi^{-1}[\{1\}] = \{k_1,k_2\}$ is an ancestry of dimension $1$.
Conversely,
if $\xi$ is an ancestry of dimension $d = 1$
then $\xi^{-1}[\{1\}] = \{k_1,k_2\}$ where
$k_1 < k_2$ satisfy the condition in Equation \eqref{equation:k1k2}.
Then there are precisely two ancestries $\tilde\xi$
of dimension $0$ with $\xi \preceq \tilde\xi$.
We can call them $\xi_0, \xi_2$ with $\xi_i(k_1) = i$.
For $k \notin \{k_1,k_2\}$ we have
$\xi_0(k) = \xi_2(k) = \xi(k)$.
The set $\BL_\xi \subset \BL_\sigma$ 
is a submanifold of codimension one
with $\BL_{\xi_0}$ on one side and $\BL_{\xi_2}$ on the other side.
In the CW complex, $\xi$ is represented by an edge from
$\xi_0$ to $\xi_2$.
\end{lemma}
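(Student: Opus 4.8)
The plan is to organize everything around the transversal-section construction recalled just before the statement, reducing all four assertions to the analysis of a single coordinate $\theta_k$ at the unique Bruhat descent. Throughout I use that a function $\xi$ is an ancestry precisely when the associated cell sequence $\varrho_0 = 1, \varrho_1, \ldots, \varrho_\ell$ with $\varrho_k = \varrho_{k-1}(\acute a_{i_k})^{\xi(k)}$ is realizable by a spin path $z_k = z_{k-1}\alpha_{i_k}(\theta_k)$, $\theta_k \in (0,\pi)$ (the combinatorial criterion of Sections~4 and~7 of~\cite{Alves-Saldanha}), and that by the transversal construction $\dim(\xi)$ equals the number of indices $k$ with $\xi(k)=1$ and $\varrho_k < \varrho_{k-1}$.

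First I would prove the converse (necessity). Let $\xi$ be an ancestry of dimension $1$. By the dimension count there is exactly one index, say $k_2$, with $\xi(k_2)=1$ and $\varrho_{k_2} < \varrho_{k_2-1}$; write $i = i_{k_2}$. The key step is to show that a descent in the letter $a_i$ is possible only when the cell $\varrho_{k_2-1}$ was raised in the same letter at the immediately preceding occurrence of $a_i$. Tracking the cell sequence, the most recent index $k_1 < k_2$ with $i_{k_1}=i$ must then satisfy $\xi(k_1)=1$ and $\varrho_{k_1} > \varrho_{k_1-1}$, and no index strictly between $k_1$ and $k_2$ carries the letter $a_i$; this is exactly condition~\eqref{equation:k1k2}. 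Finally one checks that $k_1,k_2$ are the only indices with $\xi=1$: any further real crossing would, by the same analysis, create a second descent (contradicting $\dim(\xi)=1$) or violate the realizability criterion. Hence $\xi^{-1}[\{1\}]=\{k_1,k_2\}$.

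For the forward direction (sufficiency), suppose $k_1 < k_2$ satisfy~\eqref{equation:k1k2} and let $\xi$ be any function with $\xi^{-1}[\{1\}]=\{k_1,k_2\}$. Because every letter strictly between $k_1$ and $k_2$ differs from $a_i$, it either commutes with $a_i$ or meets it in a braid relation, and in both cases the two real crossings at $k_1$ and $k_2$ can be inserted consistently with any choice of virtual values $\xi(k)\in\{0,2\}$ at the remaining indices; this is what the criterion of~\cite{Alves-Saldanha} requires, so $\xi$ is an ancestry. For the dimension, the crossing at $k_1$ raises the cell and the crossing at $k_2$ lowers it again (an ``up--down'' in the single letter $a_i$), so $k_2$ is the unique descent and $\dim(\xi)=1$.

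It remains to extract the geometry and the glueing (the last three sentences). Applying the transversal construction to $\xi$, the single free coordinate is $\theta_{k_2}$, and $\BL_\xi$ is cut out inside $\BL_\sigma$ by the one smooth equation fixing $\theta_{k_2}$ at the critical value where $\varrho_{k_2}$ drops below $\varrho_{k_2-1}$; hence $\BL_\xi$ is a submanifold of codimension one, diffeomorphic to $\RR^{\ell-1}$. Letting $\theta_{k_2}$ move to either side of this critical value removes the descent at $k_2$ and, correspondingly, resolves the collision at the earlier crossing $k_1$ in one of its two possible orders, so the perturbed strata are dimension-zero ancestries agreeing with $\xi$ off $\{k_1,k_2\}$ and taking the two values $\xi(k_1)\in\{0,2\}$; these are exactly the two dimension-zero ancestries $\xi_0,\xi_2$ with $\xi\preceq\xi_i$ and $\xi_i(k_1)=i$. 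Thus $\BL_{\xi_0}$ and $\BL_{\xi_2}$ lie on the two sides of $\BL_\xi$, and in $\BLC_\sigma$ the cell $\xi$ is the edge joining the vertices $\xi_0$ and $\xi_2$. The main obstacle is the necessity argument of the second paragraph: rigorously showing that a single descent forces a consecutive same-letter ascent and no other real crossing requires careful bookkeeping of the cell sequence $\varrho_k$ under the commutation and braid relations among the letters lying between $k_1$ and $k_2$.
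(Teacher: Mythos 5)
Your overall framework (the cell sequence $\varrho_k$ in $\tilde B_{n+1}^{+}$ together with the transversal-section construction) is the same as the paper's, but you have inverted where the real work goes, and in doing so you skip the step that the paper's proof is actually about. The paper disposes of your first two paragraphs in one line (``the first two claims follow directly from the definition of ancestries'') and then spends essentially the entire proof on the claim you treat as a corollary of the geometry: that there are \emph{precisely two} dimension-zero ancestries $\tilde\xi$ with $\xi \preceq \tilde\xi$, that $\tilde\xi(k)=\xi(k)$ for every $k \notin \{k_1,k_2\}$, and that each $\tilde\xi$ is uniquely determined by the choice $\tilde\xi(k_1)\in\{0,2\}$. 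In your third paragraph this is asserted in one sentence (``resolves the collision at the earlier crossing $k_1$ in one of its two possible orders''), but it is not a formality, and the two-sides-of-a-wall picture does not deliver it: the relation $\preceq$ is combinatorial, so ``precisely two'' cannot be read off from local geometry alone --- a priori the same ancestry could lie on both sides of $\BL_\xi$, and a priori there could be ancestries $\tilde\xi \succ \xi$ whose strata are not the two sides seen in your transversal section. The paper settles all of this by lifting to $\tilde B_{n+1}^{+}$: writing $\tilde\varrho_k = \varrho_k(\acute a_{i_{k_1}})^{\tilde\varepsilon(k)}$ with $\tilde\varepsilon(k)\in\{\pm1\}$ for $k_1 \le k < k_2$, it derives the identity $(\acute a_{i_k})^{\xi(k)}(\acute a_{i_{k_1}})^{\tilde\varepsilon(k)} = (\acute a_{i_{k_1}})^{\tilde\varepsilon(k-1)}(\acute a_{i_k})^{\tilde\xi(k)}$ and runs the case analysis: if $|i_k - i_{k_1}|=1$ and $\xi(k)=2$ then $\tilde\xi(k)=\xi(k)$ with a sign flip $\tilde\varepsilon(k)=-\tilde\varepsilon(k-1)$, otherwise $\tilde\xi(k)=\xi(k)$ with $\tilde\varepsilon(k)=\tilde\varepsilon(k-1)$; finally $\tilde\xi(k_2)=1-\tilde\varepsilon(k_2-1)$ is forced. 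This bookkeeping is exactly what produces the count of two, the statement that $\xi_0,\xi_2$ agree with $\xi$ off $\{k_1,k_2\}$, and the fact that the value at $k_2$ is determined rather than free. Without it, your identification of the perturbed strata is an assertion, not a proof.

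Relatedly, you locate the ``main obstacle'' in the wrong place: you flag the necessity argument (that a single descent forces condition \eqref{equation:k1k2}) as the hard part, whereas the paper regards that as immediate from the definition of ancestry; the genuinely nontrivial content is the sign-tracking above. Your geometric reading of the last three sentences of the statement is consistent with the paper (which likewise says ``the other claims follow by construction''), but note that in the paper this is legitimate only \emph{after} the combinatorial count, since one already knows which two ancestries can appear on the two sides; in your ordering, the geometry is being asked to prove the combinatorics, which it cannot do by itself.
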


\begin{proof}
The first two claims follow directly from the definition of ancestries.
Let $\xi$ be an ancestry of dimension $d = 1$,
with $k_1 < k_2$ as above.
We then have
\[ \rho_k = \begin{cases}
\eta a_{i_{k_1}},& k_1 \le k < k_2, \\
\eta,& \textrm{otherwise.}
\end{cases} \]
If $\tilde\xi \succ \xi$ we must have
$\tilde\rho_k = \eta$ for $k < k_1$ or $k \ge k_2$:
we thus also have $\tilde\varrho_k = \varrho_k$
and therefore $\tilde\xi(k) = \xi(k)$
for $k < k_1$ or $k > k_2$.
For $k_1 \le k < k_2$ we must have
$\tilde\rho_k \in \{\eta, \eta a_{i_{k_1}}\}$.
If $k_1 \le k-1 < k < k_2$ we have
either $\rho_k = \rho_{k-1}$ or $\rho_k = \rho_{k-1} a_{i_k}$:
the second case contradicts the previous facts.
We thus have $\tilde\rho_k = \eta$ for all $k$.
In particular, $\dim(\tilde\xi) = 0$.

If $w_0, w_1 \in \tilde B_{n+1}^{+}$,
$w_0 < w_1$ and $\Pi(w_0) = \eta a_{i_{k_1}}$
then either
$w_1 = w_0 \acute a_{i_{k_1}}$ or $w_1 = w_0 \grave a_{i_{k_1}}$.
Thus, if $\tilde\xi \succ \xi$ there exists
$\tilde\varepsilon: \{k_1, \ldots, k_2 - 1\} \to \{\pm 1\}$
such that, for all $k$, $k_1 \le k < k_2$ implies
$\tilde\varrho_k =
\varrho_k (\acute a_{i_{k_1}})^{\tilde\varepsilon(k)}$.
For $k_1 < k < k_2$, we have
\begin{align*}
\tilde\varrho_k
&=
\varrho_k (\acute a_{i_{k_1}})^{\tilde\varepsilon(k)}
=
\varrho_{k-1} (\acute a_{i_k})^{\xi(k)}
(\acute a_{i_{k_1}})^{\tilde\varepsilon(k)} \\
&=
\tilde\varrho_{k-1} (\acute a_{i_k})^{\tilde\xi(k)}
=
\varrho_{k-1} (\acute a_{i_{k_1}})^{\tilde\varepsilon(k-1)}
(\acute a_{i_k})^{\tilde\xi(k)}
\end{align*}
and therefore
$(\acute a_{i_k})^{\xi(k)}
(\acute a_{i_{k_1}})^{\tilde\varepsilon(k)} 
=
(\acute a_{i_{k_1}})^{\tilde\varepsilon(k-1)}
(\acute a_{i_k})^{\tilde\xi(k)}$.
If $|i_k - i_{k_1}| = 1$ and $\xi(k) = 2$ 
this implies $\tilde\xi(k) = \xi(k)$ and
$\tilde\varepsilon(k) = - \tilde\varepsilon(k-1)$.
Otherwise,
this implies $\tilde\xi(k) = \xi(k)$ and
$\tilde\varepsilon(k) = \tilde\varepsilon(k-1)$.
In either case, this implies $\tilde\xi(k) = \xi(k)$
for all $k \notin \{k_1,k_2\}$, as desired.
Furthermore, a choice of $\tilde\xi(k_1)$
uniquely determines $\tilde\xi(k)$ for $k_1 < k < k_2$.
Similarly, we have
\[ \tilde\varrho_{k_2}
=
\varrho_{k_2}
=
\varrho_{{k_2}-1} \acute a_{i_{k_1}}
=
\tilde\varrho_{{k_2}-1} (\acute a_{i_{k_2}})^{\tilde\xi({k_2})}
=
\varrho_{{k_2}-1} (\acute a_{i_{k_1}})^{\tilde\varepsilon({k_2}-1)}
(\acute a_{i_{k_1}})^{\tilde\xi({k_2})}
\]
and therefore
$\tilde\xi(k_2) = 1 - \tilde\varepsilon(k_2-1)$,
completing the proof that there are exactly two ancestries
$\tilde\xi$ with $\tilde\xi \succ \xi$.
The other claims follow by construction.
\end{proof}

A $\varepsilon$-ancestry of dimension $0$ can be represented
over a diagram for $\sigma$ by indicating a sign at each intersection.
The edges are then constructed as follows.
A bounded connected component of the complement
of the diagram has vertices 
$k_1$ and $k_2$ on row $i_{k_1}$
plus all vertices $k$ with $k_1 < k < k_2$
and $|i_k - i_{k_1}| = 1$.
If $k_1$ and $k_2$ have opposite signs 
we can click on that connected component,
with the effect of changing all signs on its boundary.

\begin{example}
\label{example:45132-a1}
Figure~\ref{fig:45132x} shows an example of the construction above.
We take $n = 4$ and $\sigma = [45132] = a_2a_3a_1a_2a_4a_3a_2$,
Figure~\ref{fig:45132} shows this reduced word as a diagram.

%%%%%%%%%%%%%%%%%%%%%%%%%%%%%%%%%%%%%%%
%%%%%%%%%%%%%%%%%%%%%%%%%%%%%%%%%%%%%%
%%%%%%%%%%%%%%%%%%%%%%%%%%%%%%%%%%%% First Page 45132
\begin{figure}[ht!]
\centering
\resizebox{0.2\textwidth}{!}{

\begin{tikzpicture}[roundnode/.style={circle, draw=black, fill=black, minimum size=0.1mm,inner sep=2pt},
squarednode/.style={rectangle, draw=red!60, fill=red!5, very thick, minimum size=5mm},
bigdiamondnode/.style={draw,diamond, fill=black, minimum size=1mm,very thick,inner sep=4pt},
diamondnodew/.style={draw,diamond, fill=white, minimum size=1mm,very thick,inner sep=4pt},
bigroundnode/.style={circle, draw=black, fill=black, minimum size=0.1mm,very thick, inner sep=4pt},
bigroundnodew/.style={circle, draw=black, fill=white, minimum size=0.1mm,very thick, inner sep=4pt},]

	%%Line connecting Image 1 to Image 2
	%\draw[ultra thick] (-3,2.5) -- (9,2.5);
	
	%%Codes for Image 1
	\begin{scope}[shift={(-5.5,0)}]
		%\draw[ultra thick,fill=white] (0.5,0.5) -- (0.5,4.5) -- (5.5,4.5) -- (5.5,0.5) -- cycle;
				%Define nodes
		%First Column
		\node[roundnode] at (1,5) (n15) {};
		\node[roundnode] at (1,4) (n14) {};
		\node[roundnode] at (1,3) (n13) {};
		\node[roundnode] at (1,2) (n12) {};
		\node[roundnode] at (1,1) (n11) {};
		
		%Second, Third, Fourth, Fifth, Sixth, Seventh, Eighth,Nineth, Tenth   Columns (invisible nodes)
        \node at (2,5) (n25) {};		
		\node at (2,4) (n24) {};
		\node at (2,3) (n23) {};
		\node at (2,2) (n22) {};
		\node at (2,1) (n21) {};

		\node at (3,5) (n35) {};
		\node at (3,4) (n34) {};
		\node at (3,3) (n33) {};
		\node at (3,2) (n32) {};
		\node at (3,1) (n31) {};
		
        \node at (4,5) (n45) {};		
		\node at (4,4) (n44) {};
		\node at (4,3) (n43) {};
		\node at (4,2) (n42) {};
		\node at (4,1) (n41) {};

		\node at (5,5) (n55) {};		
		\node at (5,4) (n54) {};
		\node at (5,3) (n53) {};
		\node at (5,2) (n52) {};
		\node at (5,1) (n51) {};
		
		\node at (6,5) (n65) {};		
		\node at (6,4) (n64) {};
		\node at (6,3) (n63) {};
		\node at (6,2) (n62) {};
		\node at (6,1) (n61) {};

		\node at (7,5) (n75) {};		
		\node at (7,4) (n74) {};
		\node at (7,3) (n73) {};
		\node at (7,2) (n72) {};
		\node at (7,1) (n71) {};

		%Seventh Column
		
		\node[roundnode] at (8,5) (n85) {};
		\node[roundnode] at (8,4) (n84) {};
		\node[roundnode] at (8,3) (n83) {};
		\node[roundnode] at (8,2) (n82) {};
		\node[roundnode] at (8,1) (n81) {};
		
		%Draw Lines connecting the nodes
		\draw[very thick] (n15) -- (n25.center);
		\draw[very thick] (n14) -- (n23.center);
		\draw[very thick] (n13) -- (n24.center);
		\draw[very thick] (n12) -- (n22.center);
        \draw[very thick] (n11) -- (n51.center);		
        
        \draw[very thick] (n25.center) -- (n35.center);
        \draw[very thick] (n24.center) -- (n34.center);
        \draw[very thick] (n23.center) -- (n32.center);
        \draw[very thick] (n22.center) -- (n33.center);
        %\draw[very thick] (n21.center) -- (n51.center);
        
        \draw[very thick] (n35.center) -- (n44.center);
        \draw[very thick] (n34.center) -- (n45.center);
        \draw[very thick] (n33.center) -- (n43.center);
        \draw[very thick] (n32.center) -- (n52.center);
        %\draw[very thick] (n21.center) -- (n51.center);
		
        \draw[very thick] (n45.center) -- (n85.center);
        \draw[very thick] (n44.center) -- (n53.center);
        \draw[very thick] (n43.center) -- (n54.center);
        \draw[very thick] (n42.center) -- (n52.center);
    %    \draw[very thick] (n21.center) -- (n51.center);		

        \draw[very thick] (n54.center) -- (n74.center);
        
        \draw[very thick] (n53.center) -- (n63.center);
        
        \draw[very thick] (n51.center) -- (n62.center);	
        
        \draw[very thick] (n52.center) -- (n61.center);

        %\draw[very thick] (n65.center) -- (n75.center);
        
        \draw[very thick] (n64.center) -- (n74.center);
        
        \draw[very thick] (n63.center) -- (n72.center);			
        
        \draw[very thick] (n62.center) -- (n73.center);
        
        \draw[very thick] (n61.center) -- (n81.center);

       % \draw[very thick] (n75.center) -- (n105.center);
        
        \draw[very thick] (n74.center) -- (n83.center);	
        
        \draw[very thick] (n73.center) -- (n84.center);	        
        				
		\draw[very thick] (n72.center) -- (n82.center);

	\end{scope}
\end{tikzpicture}}
\caption{The permutation $\sigma \in S_5$.}
\label{fig:45132}
\end{figure}
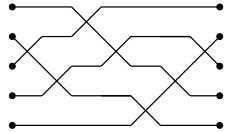	

As an example, take
$z_0 = \hat a_1 \acute \sigma$, where
\[ \acute\sigma = 
\frac{-1 + \hat a_1\hat a_2
+ \hat a_3 - \hat a_1\hat a_2\hat a_3
+ \hat a_1\hat a_4 + \hat a_2\hat a_4
-\hat a_1 \hat a_3 \hat a_4 - \hat a_2\hat a_3\hat a_4}{2\sqrt{2}}. \]

\begin{figure}[h!]
\centering

\resizebox{0.50\textwidth}{!}{

\begin{tikzpicture}[roundnode/.style={circle, draw=black, fill=black, minimum size=0.1mm,inner sep=2pt},
squarednode/.style={rectangle, draw=red!60, fill=red!5, very thick, minimum size=5mm},
diamondnode/.style={draw,diamond, fill=black, minimum size=1mm,very thick,inner sep=4pt},
diamondnodew/.style={draw,diamond, fill=white, minimum size=1mm,very thick,inner sep=4pt},
bigroundnode/.style={circle, draw=black, fill=black, minimum size=0.1mm,very thick, inner sep=4pt},
bigroundnodew/.style={circle, draw=black, fill=white, minimum size=0.1mm,very thick, inner sep=4pt},]

	%%%%%%%%%%%%%%%%%%%%%%%%%%%%%%%%%%%%%%%%%%%%%%%%%%%%%%%%%%%%%%%%%%%%
	%%%%%%%%%%%%%%%%%%%%%%%%%%%%%%%%%%%%%%%%%%%%%%%%%%%%%%%%%%%%%%%%%%%%%%
	%%%%%%%%%%%%%%%%%%%%%%%%%%%%%%%%%%%%%%%%%%%%%%%%%%%%%%%%%%%%%%%%%%%%%%

	%%Codes for Image 1
	%%Codes for Image 1
	\begin{scope}[shift={(-8.5,0)}]
		\draw[ultra thick,fill=white] (0.5,0.5) -- (0.5,5.5) -- (8.5,5.5) -- (8.5,0.5) -- cycle;
		
		\draw[ultra thick] (8.5,3) -- (10.5,3);
		\draw[ultra thick] (18.5,3) -- (21.5,3);
		\draw[ultra thick] (14.5,1) -- (14.5,-5);
				%Define nodes
		%First Column
		\node[roundnode] at (1,5) (n15) {};
		\node[roundnode] at (1,4) (n14) {};
		\node[roundnode] at (1,3) (n13) {};
		\node[roundnode] at (1,2) (n12) {};
		\node[roundnode] at (1,1) (n11) {};
		
		%Second, Third, Fourth, Fifth, Sixth, Seventh, Eighth,Nineth, Tenth   Columns (invisible nodes)
        \node at (2,5) (n25) {};		
		\node at (2,4) (n24) {};
		\node at (2,3) (n23) {};
		\node at (2,2) (n22) {};
		\node at (2,1) (n21) {};

		\node at (3,5) (n35) {};
		\node at (3,4) (n34) {};
		\node at (3,3) (n33) {};
		\node at (3,2) (n32) {};
		\node at (3,1) (n31) {};
		
        \node at (4,5) (n45) {};		
		\node at (4,4) (n44) {};
		\node at (4,3) (n43) {};
		\node at (4,2) (n42) {};
		\node at (4,1) (n41) {};

		\node at (5,5) (n55) {};		
		\node at (5,4) (n54) {};
		\node at (5,3) (n53) {};
		\node at (5,2) (n52) {};
		\node at (5,1) (n51) {};
		
		\node at (6,5) (n65) {};		
		\node at (6,4) (n64) {};
		\node at (6,3) (n63) {};
		\node at (6,2) (n62) {};
		\node at (6,1) (n61) {};

		\node at (7,5) (n75) {};		
		\node at (7,4) (n74) {};
		\node at (7,3) (n73) {};
		\node at (7,2) (n72) {};
		\node at (7,1) (n71) {};

		%Seventh Column
		
		\node[roundnode] at (8,5) (n85) {};
		\node[roundnode] at (8,4) (n84) {};
		\node[roundnode] at (8,3) (n83) {};
		\node[roundnode] at (8,2) (n82) {};
		\node[roundnode] at (8,1) (n81) {};
		
		%Draw Lines connecting the nodes
		\draw[very thick] (n15) -- (n35.center);
		\draw[very thick] (n14) -- (n23.center);
		\draw[very thick] (n13) -- (n24.center);
		\draw[very thick] (n12) -- (n22.center);
        \draw[very thick] (n11) -- (n51.center);		
        
        \draw[very thick] (n24.center) -- (n34.center);
        \draw[very thick] (n23.center) -- (n32.center);
        \draw[very thick] (n22.center) -- (n33.center);
        %\draw[very thick] (n22.center) -- (n33.center);
        %\draw[very thick] (n21.center) -- (n51.center);
        
        %\draw[very thick] (n35.center) -- (n85.center);
        \draw[very thick] (n35.center) -- (n44.center);
        \draw[very thick] (n34.center) -- (n45.center);
        \draw[very thick] (n33.center) -- (n43.center);
        \draw[very thick] (n32.center) -- (n52.center);
		
        \draw[very thick] (n45.center) -- (n85.center);
        \draw[very thick] (n44.center) -- (n53.center);
        \draw[very thick] (n43.center) -- (n54.center);
        \draw[very thick] (n42.center) -- (n52.center);
    %    \draw[very thick] (n21.center) -- (n51.center);		

        \draw[very thick] (n54.center) -- (n74.center);
        
        \draw[very thick] (n53.center) -- (n63.center);
        
        \draw[very thick] (n51.center) -- (n62.center);	
        
        \draw[very thick] (n52.center) -- (n61.center);

        %\draw[very thick] (n65.center) -- (n75.center);
        
        \draw[very thick] (n64.center) -- (n74.center);
        
        \draw[very thick] (n63.center) -- (n72.center);			
        
        \draw[very thick] (n62.center) -- (n73.center);
        
        \draw[very thick] (n61.center) -- (n81.center);

       % \draw[very thick] (n75.center) -- (n105.center);
        
        \draw[very thick] (n74.center) -- (n83.center);	
        
        \draw[very thick] (n73.center) -- (n84.center);	        
        				
		\draw[very thick] (n72.center) -- (n82.center);	
		
		%Draw Diamonds and Rounds
		\node[bigroundnodew] at (1.5,3.5) (m1) {};
		\node[bigroundnode] at (3.5,4.5) (m2) {};
		\node[bigroundnodew] at (2.5,2.5) (m3) {};
		\node[bigroundnode] at (4.5,3.5) (m4) {};
		\node[bigroundnode] at (5.5,1.5) (m5) {};
		\node[bigroundnodew] at (6.5,2.5) (m6) {};
		\node[bigroundnode] at (7.5,3.5) (m7) {};
		
	\end{scope}

	%%%%%%%%%%%%%%%%%%%%%%%%%%%%%%%%%%%%%%%%%%%%%%
	%%%%%%%%%%%%%%%%%%%%%%%%%%%%%%%%%%%%%%%%%%%%%%%
	%%%%%%%%%%%%%%%%%%%%%%%%%%%%%%%%%%%%%%%%%%%%%%%%
	%%Codes for Image 1
	%%Codes for Image 1
	\begin{scope}[shift={(1.5,0)}]
		\draw[ultra thick,fill=white] (0.5,0.5) -- (0.5,5.5) -- (8.5,5.5) -- (8.5,0.5) -- cycle;
				%Define nodes
		%First Column
		\node[roundnode] at (1,5) (n15) {};
		\node[roundnode] at (1,4) (n14) {};
		\node[roundnode] at (1,3) (n13) {};
		\node[roundnode] at (1,2) (n12) {};
		\node[roundnode] at (1,1) (n11) {};
		
		%Second, Third, Fourth, Fifth, Sixth, Seventh, Eighth,Nineth, Tenth   Columns (invisible nodes)
        \node at (2,5) (n25) {};		
		\node at (2,4) (n24) {};
		\node at (2,3) (n23) {};
		\node at (2,2) (n22) {};
		\node at (2,1) (n21) {};

		\node at (3,5) (n35) {};
		\node at (3,4) (n34) {};
		\node at (3,3) (n33) {};
		\node at (3,2) (n32) {};
		\node at (3,1) (n31) {};
		
        \node at (4,5) (n45) {};		
		\node at (4,4) (n44) {};
		\node at (4,3) (n43) {};
		\node at (4,2) (n42) {};
		\node at (4,1) (n41) {};

		\node at (5,5) (n55) {};		
		\node at (5,4) (n54) {};
		\node at (5,3) (n53) {};
		\node at (5,2) (n52) {};
		\node at (5,1) (n51) {};
		
		\node at (6,5) (n65) {};		
		\node at (6,4) (n64) {};
		\node at (6,3) (n63) {};
		\node at (6,2) (n62) {};
		\node at (6,1) (n61) {};

		\node at (7,5) (n75) {};		
		\node at (7,4) (n74) {};
		\node at (7,3) (n73) {};
		\node at (7,2) (n72) {};
		\node at (7,1) (n71) {};

		%Seventh Column
		
		\node[roundnode] at (8,5) (n85) {};
		\node[roundnode] at (8,4) (n84) {};
		\node[roundnode] at (8,3) (n83) {};
		\node[roundnode] at (8,2) (n82) {};
		\node[roundnode] at (8,1) (n81) {};
		
		%Draw Lines connecting the nodes
		\draw[very thick] (n15) -- (n35.center);
		\draw[very thick] (n14) -- (n23.center);
		\draw[very thick] (n13) -- (n24.center);
		\draw[very thick] (n12) -- (n22.center);
        \draw[very thick] (n11) -- (n51.center);		
        
        \draw[very thick] (n24.center) -- (n34.center);
        \draw[very thick] (n23.center) -- (n32.center);
        \draw[very thick] (n22.center) -- (n33.center);
        %\draw[very thick] (n22.center) -- (n33.center);
        %\draw[very thick] (n21.center) -- (n51.center);
        
        %\draw[very thick] (n35.center) -- (n85.center);
        \draw[very thick] (n35.center) -- (n44.center);
        \draw[very thick] (n34.center) -- (n45.center);
        \draw[very thick] (n33.center) -- (n43.center);
        \draw[very thick] (n32.center) -- (n52.center);
		
        \draw[very thick] (n45.center) -- (n85.center);
        \draw[very thick] (n44.center) -- (n53.center);
        \draw[very thick] (n43.center) -- (n54.center);
        \draw[very thick] (n42.center) -- (n52.center);
    %    \draw[very thick] (n21.center) -- (n51.center);		

        \draw[very thick] (n54.center) -- (n74.center);
        
        \draw[very thick] (n53.center) -- (n63.center);
        
        \draw[very thick] (n51.center) -- (n62.center);	
        
        \draw[very thick] (n52.center) -- (n61.center);

        %\draw[very thick] (n65.center) -- (n75.center);
        
        \draw[very thick] (n64.center) -- (n74.center);
        
        \draw[very thick] (n63.center) -- (n72.center);			
        
        \draw[very thick] (n62.center) -- (n73.center);
        
        \draw[very thick] (n61.center) -- (n81.center);

       % \draw[very thick] (n75.center) -- (n105.center);
        
        \draw[very thick] (n74.center) -- (n83.center);	
        
        \draw[very thick] (n73.center) -- (n84.center);	        
        				
		\draw[very thick] (n72.center) -- (n82.center);	
		
		%Draw Diamonds and Rounds
		\node[bigroundnode] at (1.5,3.5) (m1) {};
		\node[bigroundnodew] at (3.5,4.5) (m2) {};
		\node[bigroundnode] at (2.5,2.5) (m3) {};
		\node[bigroundnodew] at (4.5,3.5) (m4) {};
		\node[bigroundnode] at (5.5,1.5) (m5) {};
		\node[bigroundnodew] at (6.5,2.5) (m6) {};
		\node[bigroundnode] at (7.5,3.5) (m7) {};
		
	\end{scope}

	%%%%%%%%%%%%%%%%%%%%%%%%%%%%%%%%%%%%%%%%%%%%%%%%%%%%%%%%%%%%%%%%%%%%%%%%%%%%%%%%%%%%%%%%%%%%%%%%%%%%%%%%%%%%%%%%%%%%%%%%%
	%%Codes for Image 1
	%%Codes for Image 1
	\begin{scope}[shift={(1.5,-7)}]
		\draw[ultra thick,fill=white] (0.5,0.5) -- (0.5,5.5) -- (8.5,5.5) -- (8.5,0.5) -- cycle;
				%Define nodes
		%First Column
		\node[roundnode] at (1,5) (n15) {};
		\node[roundnode] at (1,4) (n14) {};
		\node[roundnode] at (1,3) (n13) {};
		\node[roundnode] at (1,2) (n12) {};
		\node[roundnode] at (1,1) (n11) {};
		
		%Second, Third, Fourth, Fifth, Sixth, Seventh, Eighth,Nineth, Tenth   Columns (invisible nodes)
        \node at (2,5) (n25) {};		
		\node at (2,4) (n24) {};
		\node at (2,3) (n23) {};
		\node at (2,2) (n22) {};
		\node at (2,1) (n21) {};

		\node at (3,5) (n35) {};
		\node at (3,4) (n34) {};
		\node at (3,3) (n33) {};
		\node at (3,2) (n32) {};
		\node at (3,1) (n31) {};
		
        \node at (4,5) (n45) {};		
		\node at (4,4) (n44) {};
		\node at (4,3) (n43) {};
		\node at (4,2) (n42) {};
		\node at (4,1) (n41) {};

		\node at (5,5) (n55) {};		
		\node at (5,4) (n54) {};
		\node at (5,3) (n53) {};
		\node at (5,2) (n52) {};
		\node at (5,1) (n51) {};
		
		\node at (6,5) (n65) {};		
		\node at (6,4) (n64) {};
		\node at (6,3) (n63) {};
		\node at (6,2) (n62) {};
		\node at (6,1) (n61) {};

		\node at (7,5) (n75) {};		
		\node at (7,4) (n74) {};
		\node at (7,3) (n73) {};
		\node at (7,2) (n72) {};
		\node at (7,1) (n71) {};

		%Seventh Column
		
		\node[roundnode] at (8,5) (n85) {};
		\node[roundnode] at (8,4) (n84) {};
		\node[roundnode] at (8,3) (n83) {};
		\node[roundnode] at (8,2) (n82) {};
		\node[roundnode] at (8,1) (n81) {};
		
		%Draw Lines connecting the nodes
		\draw[very thick] (n15) -- (n35.center);
		\draw[very thick] (n14) -- (n23.center);
		\draw[very thick] (n13) -- (n24.center);
		\draw[very thick] (n12) -- (n22.center);
        \draw[very thick] (n11) -- (n51.center);		
        
        \draw[very thick] (n24.center) -- (n34.center);
        \draw[very thick] (n23.center) -- (n32.center);
        \draw[very thick] (n22.center) -- (n33.center);
        %\draw[very thick] (n22.center) -- (n33.center);
        %\draw[very thick] (n21.center) -- (n51.center);
        
        %\draw[very thick] (n35.center) -- (n85.center);
        \draw[very thick] (n35.center) -- (n44.center);
        \draw[very thick] (n34.center) -- (n45.center);
        \draw[very thick] (n33.center) -- (n43.center);
        \draw[very thick] (n32.center) -- (n52.center);
		
        \draw[very thick] (n45.center) -- (n85.center);
        \draw[very thick] (n44.center) -- (n53.center);
        \draw[very thick] (n43.center) -- (n54.center);
        \draw[very thick] (n42.center) -- (n52.center);
    %    \draw[very thick] (n21.center) -- (n51.center);		

        \draw[very thick] (n54.center) -- (n74.center);
        
        \draw[very thick] (n53.center) -- (n63.center);
        
        \draw[very thick] (n51.center) -- (n62.center);	
        
        \draw[very thick] (n52.center) -- (n61.center);

        %\draw[very thick] (n65.center) -- (n75.center);
        
        \draw[very thick] (n64.center) -- (n74.center);
        
        \draw[very thick] (n63.center) -- (n72.center);			
        
        \draw[very thick] (n62.center) -- (n73.center);
        
        \draw[very thick] (n61.center) -- (n81.center);

       % \draw[very thick] (n75.center) -- (n105.center);
        
        \draw[very thick] (n74.center) -- (n83.center);	
        
        \draw[very thick] (n73.center) -- (n84.center);	        
        				
		\draw[very thick] (n72.center) -- (n82.center);	
		
		%Draw Diamonds and Rounds
		\node[bigroundnode] at (1.5,3.5) (m1) {};
		\node[bigroundnodew] at (3.5,4.5) (m2) {};
		\node[bigroundnodew] at (2.5,2.5) (m3) {};
		\node[bigroundnode] at (4.5,3.5) (m4) {};
		\node[bigroundnodew] at (5.5,1.5) (m5) {};
		\node[bigroundnode] at (6.5,2.5) (m6) {};
		\node[bigroundnode] at (7.5,3.5) (m7) {};
		
	\end{scope}

	%%%%%%%%%%%%%%%%%%%%%%%%%%%%%%%%%%%%%%%%%%%%%%%5
	%%%%%%%%%%%%%%%%%%%%%%%%%%%%%%%%%%%%%%%%%%%%%%%%5
	%%Codes for Image 1
	%%Codes for Image 1
	\begin{scope}[shift={(11.5,0)}]
		\draw[ultra thick,fill=white] (0.5,0.5) -- (0.5,5.5) -- (8.5,5.5) -- (8.5,0.5) -- cycle;
				%Define nodes
		%First Column
		\node[roundnode] at (1,5) (n15) {};
		\node[roundnode] at (1,4) (n14) {};
		\node[roundnode] at (1,3) (n13) {};
		\node[roundnode] at (1,2) (n12) {};
		\node[roundnode] at (1,1) (n11) {};
		
		%Second, Third, Fourth, Fifth, Sixth, Seventh, Eighth,Nineth, Tenth   Columns (invisible nodes)
        \node at (2,5) (n25) {};		
		\node at (2,4) (n24) {};
		\node at (2,3) (n23) {};
		\node at (2,2) (n22) {};
		\node at (2,1) (n21) {};

		\node at (3,5) (n35) {};
		\node at (3,4) (n34) {};
		\node at (3,3) (n33) {};
		\node at (3,2) (n32) {};
		\node at (3,1) (n31) {};
		
        \node at (4,5) (n45) {};		
		\node at (4,4) (n44) {};
		\node at (4,3) (n43) {};
		\node at (4,2) (n42) {};
		\node at (4,1) (n41) {};

		\node at (5,5) (n55) {};		
		\node at (5,4) (n54) {};
		\node at (5,3) (n53) {};
		\node at (5,2) (n52) {};
		\node at (5,1) (n51) {};
		
		\node at (6,5) (n65) {};		
		\node at (6,4) (n64) {};
		\node at (6,3) (n63) {};
		\node at (6,2) (n62) {};
		\node at (6,1) (n61) {};

		\node at (7,5) (n75) {};		
		\node at (7,4) (n74) {};
		\node at (7,3) (n73) {};
		\node at (7,2) (n72) {};
		\node at (7,1) (n71) {};

		%Seventh Column
		
		\node[roundnode] at (8,5) (n85) {};
		\node[roundnode] at (8,4) (n84) {};
		\node[roundnode] at (8,3) (n83) {};
		\node[roundnode] at (8,2) (n82) {};
		\node[roundnode] at (8,1) (n81) {};
		
		%Draw Lines connecting the nodes
\draw[very thick] (n15) -- (n35.center);
		\draw[very thick] (n14) -- (n23.center);
		\draw[very thick] (n13) -- (n24.center);
		\draw[very thick] (n12) -- (n22.center);
        \draw[very thick] (n11) -- (n51.center);		
        
        \draw[very thick] (n24.center) -- (n34.center);
        \draw[very thick] (n23.center) -- (n32.center);
        \draw[very thick] (n22.center) -- (n33.center);
        %\draw[very thick] (n22.center) -- (n33.center);
        %\draw[very thick] (n21.center) -- (n51.center);
        
        %\draw[very thick] (n35.center) -- (n85.center);
        \draw[very thick] (n35.center) -- (n44.center);
        \draw[very thick] (n34.center) -- (n45.center);
        \draw[very thick] (n33.center) -- (n43.center);
        \draw[very thick] (n32.center) -- (n52.center);
		
        \draw[very thick] (n45.center) -- (n85.center);
        \draw[very thick] (n44.center) -- (n53.center);
        \draw[very thick] (n43.center) -- (n54.center);
        \draw[very thick] (n42.center) -- (n52.center);
    %    \draw[very thick] (n21.center) -- (n51.center);		

        \draw[very thick] (n54.center) -- (n74.center);
        
        \draw[very thick] (n53.center) -- (n63.center);
        
        \draw[very thick] (n51.center) -- (n62.center);	
        
        \draw[very thick] (n52.center) -- (n61.center);

        %\draw[very thick] (n65.center) -- (n75.center);
        
        \draw[very thick] (n64.center) -- (n74.center);
        
        \draw[very thick] (n63.center) -- (n72.center);			
        
        \draw[very thick] (n62.center) -- (n73.center);
        
        \draw[very thick] (n61.center) -- (n81.center);

       % \draw[very thick] (n75.center) -- (n105.center);
        
        \draw[very thick] (n74.center) -- (n83.center);	
        
        \draw[very thick] (n73.center) -- (n84.center);	        
        				
		\draw[very thick] (n72.center) -- (n82.center);	
		
		%Draw Diamonds and Rounds
		\node[bigroundnode] at (1.5,3.5) (m1) {};
		\node[bigroundnodew] at (3.5,4.5) (m2) {};
		\node[bigroundnode] at (2.5,2.5) (m3) {};
		\node[bigroundnode] at (4.5,3.5) (m4) {};
		\node[bigroundnode] at (5.5,1.5) (m5) {};
		\node[bigroundnode] at (6.5,2.5) (m6) {};
		\node[bigroundnodew] at (7.5,3.5) (m7) {};
		
	\end{scope}

	\end{tikzpicture}}
	
	\caption{The stratification of $\BL_{z_0}$.}
\label{fig:45132x}
\end{figure}

\newpage

A case by case verification shows that $\BL_{z_0}$
has $4$ strata of dimension $0$,
$3$ strata of dimension $1$ and
no strata of dimension higher than $1$.
It follows that $\BL_{z_0}$ is homotopically equivalent
to the graph in Figure~\ref{fig:45132x}
and therefore contractible.
In the figure, black indicates $\varepsilon(k) = -1$
and white indicates $\varepsilon(k) = +1$.
\end{example}

%%%%%%%%%%%%%%%%%%%%%%%%%%%%%%%%%%%%
% what follows we will details one more example.

\begin{example}
\label{section:n3}

Take $n=3$ and $\sigma = [4 3 1 2]$. 
Let us fix the reduced word $a_1a_2a_3a_1a_2$, 
shown in the Figure~\ref{fig:4312}.

%%%%%%%%%%%%%%%%%%%%%%%%%%%%%%%%%%%%%%%
%%%%%%%%%%%%%%%%%%%%%%%%%%%%%%%%%%%%%%
%%%%%%%%%%%%%%%%%%%%%%%%%%%%%%%%%%%% First Page 4312
\begin{figure}[ht!]
\centering
\resizebox{0.2\textwidth}{!}{

\begin{tikzpicture}[roundnode/.style={circle, draw=black, fill=black, minimum size=0.1mm,inner sep=2pt},
squarednode/.style={rectangle, draw=red!60, fill=red!5, very thick, minimum size=5mm},
diamondnode/.style={draw,diamond, fill=black, minimum size=1mm,very thick,inner sep=4pt},
diamondnodew/.style={draw,diamond, fill=white, minimum size=1mm,very thick,inner sep=4pt},
bigdiamondnodew/.style={draw,diamond, fill=white, minimum size=1mm,very thick, inner sep=4pt},
bigroundnode/.style={circle, draw=black, fill=black, minimum size=0.1mm,very thick, inner sep=4pt},
bigroundnodew/.style={circle, draw=black, fill=white, minimum size=0.1mm,very thick, inner sep=4pt},]

	%%Line connecting Image 1 to Image 2
	%\draw[ultra thick] (-3,2.5) -- (9,2.5);
	
	%%Codes for Image 1
	\begin{scope}[shift={(-5.5,0)}]
		%\draw[ultra thick,fill=white] (0.5,0.5) -- (0.5,4.5) -- (5.5,4.5) -- (5.5,0.5) -- cycle;
				%Define nodes
		%First Column
		
		\node[roundnode] at (1,4) (n14) {};
		\node[roundnode] at (1,3) (n13) {};
		\node[roundnode] at (1,2) (n12) {};
		\node[roundnode] at (1,1) (n11) {};
		
		%Second, Third, Fourth, Fifth, Sixth, Seventh, Eighth,Nineth, Tenth   Columns (invisible nodes)
       		
		\node at (2,4) (n24) {};
		\node at (2,3) (n23) {};
		\node at (2,2) (n22) {};
		\node at (2,1) (n21) {};

		\node at (3,4) (n34) {};
		\node at (3,3) (n33) {};
		\node at (3,2) (n32) {};
		\node at (3,1) (n31) {};

		\node at (4,4) (n44) {};
		\node at (4,3) (n43) {};
		\node at (4,2) (n42) {};
		\node at (4,1) (n41) {};

		\node at (5,4) (n54) {};
		\node at (5,3) (n53) {};
		\node at (5,2) (n52) {};
		\node at (5,1) (n51) {};

		\node at (6,4) (n64) {};
		\node at (6,3) (n63) {};
		\node at (6,2) (n62) {};
		\node at (6,1) (n61) {};	
	
	%Last Column
		
		\node[roundnode] at (6,4) (n64) {};
		\node[roundnode] at (6,3) (n63) {};
		\node[roundnode] at (6,2) (n62) {};
		\node[roundnode] at (6,1) (n61) {};
		
		%Draw Lines connecting the nodes
		\draw[very thick] (n14) -- (n23.center);
		\draw[very thick] (n13) -- (n24.center);
		\draw[very thick] (n12) -- (n22.center);
        \draw[very thick] (n11) -- (n31.center);		
         
        \draw[very thick] (n24.center) -- (n44.center);
        \draw[very thick] (n23.center) -- (n32.center);
        \draw[very thick] (n22.center) -- (n33.center);
        
        \draw[very thick] (n33.center) -- (n43.center);
        \draw[very thick] (n32.center) -- (n41.center);
        \draw[very thick] (n31.center) -- (n42.center);
		
        \draw[very thick] (n44.center) -- (n53.center);
        \draw[very thick] (n43.center) -- (n54.center);
        \draw[very thick] (n42.center) -- (n52.center);
        \draw[very thick] (n41.center) -- (n61.center);

        \draw[very thick] (n54.center) -- (n64.center);
        \draw[very thick] (n52.center) -- (n63.center);
        \draw[very thick] (n53.center) -- (n62.center);

	\end{scope}
\end{tikzpicture}}
\caption{The permutation $\sigma \in S_4$.}
\label{fig:4312}
\end{figure}

Write $L \in \Lo_4^1$ as
 \[ L = \begin{pmatrix}
1 & & & \\ x & 1 & & \\
u & y & 1 & \\ w & v & z & 1 
\end{pmatrix}, \, u, v, w, x, y, z \in \mathbb{R}. \]

Applying the definition, the set $\BL_\sigma$ is

\[ \BL_\sigma = \bigg\{ L \, | \, w=0, \, u \neq 0, v \neq 0,  xyz-xv-uz \neq 0  \bigg\} \subset \Lo^1_4.\] 

A computations shows that $\acute{\sigma}=\frac{\sqrt[]{2}}{4}(-1+ \hat{a}_1 +\hat{a}_2 + \hat{a}_1 \hat{a}_2 + \hat{a}_3 + \hat{a}_1\hat{a}_3+ \hat{a}_2\hat{a}_3-\hat{a}_1\hat{a}_2\hat{a}_3)$: notice that $|-\frac{\sqrt[]{2}}{4}|=2^{-(n+1-c)/2}$.
We have $\ell = 5$, $b = |\Block(\sigma)| = 0$, $c = \nc(\sigma) = 1$ and
if $z \in \acute{\sigma} \Quat_{4}$ implies $\Re(z)$
equal $-\dfrac{\sqrt[]{2}}{4}$ or  $\dfrac{\sqrt[]{2}}{4}$.
Therefore and from Remark~\ref{remark_orbits}, for all $z \in \acute{\sigma} \Quat_{4}$ the orbit $\cO(z)$ has size $2^{n-c+1}=8$.
Now we describe $\BL_z$, for $z \in \acute{\sigma} \Quat_{4}$.

The matrices $L \in \BLS_\varepsilon$,
where $\varepsilon$ is an ancestry of dimension $0$, can be written as the following product
\[ L = \lambda_1(t_1) \lambda_2(t_2) \lambda_3(t_3) \lambda_1(t_4)\lambda_2(t_5) = 
\begin{pmatrix}
1 & & & \\ t_1+t_4 & 1 & & \\
t_2 t_4 & t_2 + t_5 & 1 & \\ 0 & t_3t_5 & t_3 & 1 
\end{pmatrix} \]
then $u = t_2 t_4, \, v = t_3 t_5, \, xyz-xv-uz = t_1 t_2 t_3$.

Set $z_1=\acute{a}_1 \acute{a}_2 \grave{a}_3 \acute{a}_1 \grave{a}_2=\frac{\sqrt[]{2}}{4}(1+ \hat{a}_1 +\hat{a}_2 - \hat{a}_1 \hat{a}_2 + \hat{a}_3 - \hat{a}_1\hat{a}_3- \hat{a}_2\hat{a}_3-\hat{a}_1\hat{a}_2\hat{a}_3)$.
From Equation~\ref{theo:N}, $N(z_1)=2^{5-3+0-1}+2^{5/2-1}(\frac{\sqrt[]{2}}{4})=3$,
therefore the set $\BL_{z_1}$ has $3$ ancestries of dimension $0$.
Moreover, 
$\BL_{z_1}$ has 
$2$ ancestries of dimension $1$ and
no ancestries of dimension higher than $1$. 
The characterization of the set $\BL_{z_1}$ is given by
\[ \BL_{z_1}=\bigg\{ L \, | \, w=0, \, u > 0, v > 0,  xyz-xv-uz > 0  \bigg\}. \]
Therefore the set $\BL_{z_1}$ contains three open strata
\[ \BL_{(-1,-1,+1,-1,+1)} = \{ L \, | \,  z > 0, \, yz - v < 0 \}, \]
\[ \BL_{(-1,+1,+1,-1,-1)} = \{ L \, | \,z < 0, \, yz - v < 0 \}, \]
\[ \BL_{(+1,-1,-1,-1,-1)} = \{ L \, | \, z < 0, \, yz - v > 0 \}, \]
and two strata of dimension $1$ 
\[ \BL_{(-1,-2,+1,-1,+2)} = \{ L \, | \, z = 0, \, yz - v < 0 \}, \]
\[ \BL_{(-2,+1,-1,+2,-1)} = \{ L \, | \, z < 0, \, yz - v = 0 \}. \]
Finally, the set $\BL_{z_1}$ is homotopically equivalent to the CW complex in Figure~\ref{fig:4312xx}.
It follows that $\BL_{z_1}$ is contractible.
%Figure~\ref{fig:4312xx} shows the stratification of $\BL_{z_1}$.

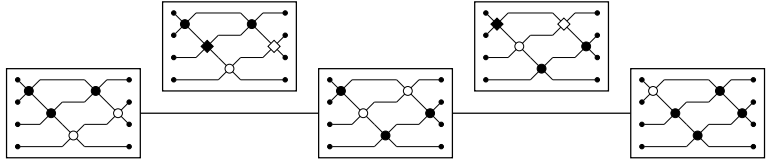
\begin{figure}[ht!]
\centering
%%%%%%%%%%%%%%%%%%%%%%%%%%%%%%%%%%%%%%%
%%%%%%%%%%%%%%%%%%%%%%%%%%%%%%%%%%%%%%
%%%%%%%%%%%%%%%%%%%%%%%%%%%%%%%%%%%%Second Page 4312
\resizebox{0.7\textwidth}{!}{
\begin{tikzpicture}[roundnode/.style={circle, draw=black, fill=black, minimum size=0.1mm,inner sep=2pt},
squarednode/.style={rectangle, draw=red!60, fill=red!5, very thick, minimum size=5mm},
diamondnode/.style={draw,diamond, fill=black, minimum size=1mm,very thick,inner sep=4pt},
diamondnodew/.style={draw,diamond, fill=white, minimum size=1mm,very thick,inner sep=4pt},
bigdiamondnodew/.style={draw,diamond, fill=white, minimum size=1mm,very thick, inner sep=4pt},
bigdiamondnode/.style={draw,diamond, fill=black, minimum size=1mm,very thick, inner sep=4pt},
bigroundnode/.style={circle, draw=black, fill=black, minimum size=0.1mm,very thick, inner sep=4pt},
bigroundnodew/.style={circle, draw=black, fill=white, minimum size=0.1mm,very thick, inner sep=4pt},]

	%%Line connecting Image 1 to Image 2
	%\draw[ultra thick] (-3,2.5) -- (9,2.5);
	\draw[ultra thick] (-19.5,2.5) -- (6.5,2.5);
	%\draw[ultra thick] (-12.5,-3) -- (6.5,-3);
	%%\draw[ultra thick] (-21,3) -- (-21,-5);
	%%%\draw[ultra thick] (17,3) -- (17,-15);
	
	%\draw[ultra thick] (-17,3) -- (-15,-1);
	%\draw[ultra thick] (-16,4) -- (-16,-3);
	%\draw[ultra thick] (-13,-8) -- (-17,-15);
	
	%\draw[ultra thick] (4,4) -- (4,-3);
	%\draw[ultra thick] (10,1) -- (10,-15);

	%%Codes for Image 1
	%%Codes for Image 1
	\begin{scope}[shift={(-24.5,0)}]
		\draw[ultra thick,fill=white] (0.5,0.5) -- (0.5,4.5) -- (6.5,4.5) -- (6.5,0.5) -- cycle;
				%Define nodes
		%First Column
		
		\node[roundnode] at (1,4) (n14) {};
		\node[roundnode] at (1,3) (n13) {};
		\node[roundnode] at (1,2) (n12) {};
		\node[roundnode] at (1,1) (n11) {};
		
		%Second, Third, Fourth, Fifth, Sixth, Seventh, Eighth,Nineth, Tenth   Columns (invisible nodes)
       		
		\node at (2,4) (n24) {};
		\node at (2,3) (n23) {};
		\node at (2,2) (n22) {};
		\node at (2,1) (n21) {};

		\node at (3,4) (n34) {};
		\node at (3,3) (n33) {};
		\node at (3,2) (n32) {};
		\node at (3,1) (n31) {};

		\node at (4,4) (n44) {};
		\node at (4,3) (n43) {};
		\node at (4,2) (n42) {};
		\node at (4,1) (n41) {};

		\node at (5,4) (n54) {};
		\node at (5,3) (n53) {};
		\node at (5,2) (n52) {};
		\node at (5,1) (n51) {};

		\node at (6,4) (n64) {};
		\node at (6,3) (n63) {};
		\node at (6,2) (n62) {};
		\node at (6,1) (n61) {};	
	
	%Last Column
		
		\node[roundnode] at (6,4) (n64) {};
		\node[roundnode] at (6,3) (n63) {};
		\node[roundnode] at (6,2) (n62) {};
		\node[roundnode] at (6,1) (n61) {};
		
		%Draw Lines connecting the nodes
		\draw[very thick] (n14) -- (n23.center);
		\draw[very thick] (n13) -- (n24.center);
		\draw[very thick] (n12) -- (n22.center);
        \draw[very thick] (n11) -- (n31.center);		
         
        \draw[very thick] (n24.center) -- (n44.center);
        \draw[very thick] (n23.center) -- (n32.center);
        \draw[very thick] (n22.center) -- (n33.center);
        
        \draw[very thick] (n33.center) -- (n43.center);
        \draw[very thick] (n32.center) -- (n41.center);
        \draw[very thick] (n31.center) -- (n42.center);
		
        \draw[very thick] (n44.center) -- (n53.center);
        \draw[very thick] (n43.center) -- (n54.center);
        \draw[very thick] (n42.center) -- (n52.center);
        \draw[very thick] (n41.center) -- (n61.center);

        \draw[very thick] (n54.center) -- (n64.center);
        \draw[very thick] (n52.center) -- (n63.center);
        \draw[very thick] (n53.center) -- (n62.center);	
		
		%Draw Diamonds and Rounds
		\node[bigroundnode] at (1.5,3.5) (m1) {};
		\node[bigroundnode] at (2.5,2.5) (m2) {};
		\node[bigroundnodew] at (3.5,1.5) (m3) {};
		\node[bigroundnode] at (4.5,3.5) (m4) {};
		\node[bigroundnodew] at (5.5,2.5) (m5) {};
	
	\end{scope}

	%%%%%%%%%%%%%%%%%%%%%%%%%%%%%%%%%%%%%%%%%%%%%%%%%%%%%%%%%%%%%%
	%%%%%%%%%%%%%%%%%%%%%%%%%%%%%%%%%%%%%%%%%%%%%%%%%%%%%%%%%%%%%%
	%%%%%%%%%%%%%%%%%%%%%%%%%%%%%%%%%%%%%%%%%%%%%%%%%%%%%%%%%%%%%%%

	\begin{scope}[shift={(-17.5,3)}]
		\draw[ultra thick,fill=white] (0.5,0.5) -- (0.5,4.5) -- (6.5,4.5) -- (6.5,0.5) -- cycle;
				%Define nodes
		
		%First Column
		
		\node[roundnode] at (1,4) (n14) {};
		\node[roundnode] at (1,3) (n13) {};
		\node[roundnode] at (1,2) (n12) {};
		\node[roundnode] at (1,1) (n11) {};
		
		%Second, Third, Fourth, Fifth, Sixth, Seventh, Eighth,Nineth, Tenth   Columns (invisible nodes)
       		
		\node at (2,4) (n24) {};
		\node at (2,3) (n23) {};
		\node at (2,2) (n22) {};
		\node at (2,1) (n21) {};

		\node at (3,4) (n34) {};
		\node at (3,3) (n33) {};
		\node at (3,2) (n32) {};
		\node at (3,1) (n31) {};

		\node at (4,4) (n44) {};
		\node at (4,3) (n43) {};
		\node at (4,2) (n42) {};
		\node at (4,1) (n41) {};

		\node at (5,4) (n54) {};
		\node at (5,3) (n53) {};
		\node at (5,2) (n52) {};
		\node at (5,1) (n51) {};

		\node at (6,4) (n64) {};
		\node at (6,3) (n63) {};
		\node at (6,2) (n62) {};
		\node at (6,1) (n61) {};	
	
	%Last Column
		
		\node[roundnode] at (6,4) (n64) {};
		\node[roundnode] at (6,3) (n63) {};
		\node[roundnode] at (6,2) (n62) {};
		\node[roundnode] at (6,1) (n61) {};
		
		%Draw Lines connecting the nodes
		\draw[very thick] (n14) -- (n23.center);
		\draw[very thick] (n13) -- (n24.center);
		\draw[very thick] (n12) -- (n22.center);
        \draw[very thick] (n11) -- (n31.center);		
         
        \draw[very thick] (n24.center) -- (n44.center);
        \draw[very thick] (n23.center) -- (n32.center);
        \draw[very thick] (n22.center) -- (n33.center);
        
        \draw[very thick] (n33.center) -- (n43.center);
        \draw[very thick] (n32.center) -- (n41.center);
        \draw[very thick] (n31.center) -- (n42.center);
		
        \draw[very thick] (n44.center) -- (n53.center);
        \draw[very thick] (n43.center) -- (n54.center);
        \draw[very thick] (n42.center) -- (n52.center);
        \draw[very thick] (n41.center) -- (n61.center);

        \draw[very thick] (n54.center) -- (n64.center);
        \draw[very thick] (n52.center) -- (n63.center);
        \draw[very thick] (n53.center) -- (n62.center);	
		
		%Draw Diamonds and Rounds
		\node[bigroundnode] at (1.5,3.5) (m1) {};
		\node[bigdiamondnode] at (2.5,2.5) (m2) {};
		\node[bigroundnodew] at (3.5,1.5) (m3) {};
		\node[bigroundnode] at (4.5,3.5) (m4) {};
		\node[bigdiamondnodew] at (5.5,2.5) (m5) {};

	\end{scope}
	
	%%Codes for Image 1
	%%Codes for Image 1
	\begin{scope}[shift={(-10.5,0)}]
		\draw[ultra thick,fill=white] (0.5,0.5) -- (0.5,4.5) -- (6.5,4.5) -- (6.5,0.5) -- cycle;
				%Define nodes
		
		%First Column
		
		\node[roundnode] at (1,4) (n14) {};
		\node[roundnode] at (1,3) (n13) {};
		\node[roundnode] at (1,2) (n12) {};
		\node[roundnode] at (1,1) (n11) {};
		
		%Second, Third, Fourth, Fifth, Sixth, Seventh, Eighth,Nineth, Tenth   Columns (invisible nodes)
       		
		\node at (2,4) (n24) {};
		\node at (2,3) (n23) {};
		\node at (2,2) (n22) {};
		\node at (2,1) (n21) {};

		\node at (3,4) (n34) {};
		\node at (3,3) (n33) {};
		\node at (3,2) (n32) {};
		\node at (3,1) (n31) {};

		\node at (4,4) (n44) {};
		\node at (4,3) (n43) {};
		\node at (4,2) (n42) {};
		\node at (4,1) (n41) {};

		\node at (5,4) (n54) {};
		\node at (5,3) (n53) {};
		\node at (5,2) (n52) {};
		\node at (5,1) (n51) {};

		\node at (6,4) (n64) {};
		\node at (6,3) (n63) {};
		\node at (6,2) (n62) {};
		\node at (6,1) (n61) {};	
	
	%Last Column
		
		\node[roundnode] at (6,4) (n64) {};
		\node[roundnode] at (6,3) (n63) {};
		\node[roundnode] at (6,2) (n62) {};
		\node[roundnode] at (6,1) (n61) {};
		
		%Draw Lines connecting the nodes
		\draw[very thick] (n14) -- (n23.center);
		\draw[very thick] (n13) -- (n24.center);
		\draw[very thick] (n12) -- (n22.center);
        \draw[very thick] (n11) -- (n31.center);		
         
        \draw[very thick] (n24.center) -- (n44.center);
        \draw[very thick] (n23.center) -- (n32.center);
        \draw[very thick] (n22.center) -- (n33.center);
        
        \draw[very thick] (n33.center) -- (n43.center);
        \draw[very thick] (n32.center) -- (n41.center);
        \draw[very thick] (n31.center) -- (n42.center);
		
        \draw[very thick] (n44.center) -- (n53.center);
        \draw[very thick] (n43.center) -- (n54.center);
        \draw[very thick] (n42.center) -- (n52.center);
        \draw[very thick] (n41.center) -- (n61.center);

        \draw[very thick] (n54.center) -- (n64.center);
        \draw[very thick] (n52.center) -- (n63.center);
        \draw[very thick] (n53.center) -- (n62.center);	
		
		%Draw Diamonds and Rounds
		\node[bigroundnode] at (1.5,3.5) (m1) {};
		\node[bigroundnodew] at (2.5,2.5) (m2) {};
		\node[bigroundnode] at (3.5,1.5) (m3) {};
		\node[bigroundnodew] at (4.5,3.5) (m4) {};
		\node[bigroundnode] at (5.5,2.5) (m5) {};

	\end{scope}
	
	%%%%%%%%%%%%%%%%%%%%%%%%%%%%%%%%%%%%%%%%%%%%%%%%%%%%%
	%%%%%%%%%%%%%%%%%%%%%%%%%%%%%%%%%%%%%%%%%%%%%%%%%%%%%
	%%%%%%%%%%%%%%%%%%%%%%%%%%%%%%%%%%%%%%%%%%%%%%%%%%%%%
	
%%%%%%%%%%%%%%%%%%%%%%%%%%%%%%%%%%%%%%%%%%%%%%%%%%%%%%%%%%%%%%
	%%%%%%%%%%%%%%%%%%%%%%%%%%%%%%%%%%%%%%%%%%%%%%%%%%%%%%%%%%%%%%
	%%%%%%%%%%%%%%%%%%%%%%%%%%%%%%%%%%%%%%%%%%%%%%%%%%%%%%%%%%%%%%%

	\begin{scope}[shift={(-3.5,3)}]
		\draw[ultra thick,fill=white] (0.5,0.5) -- (0.5,4.5) -- (6.5,4.5) -- (6.5,0.5) -- cycle;
				%Define nodes
		
		%First Column
		
		\node[roundnode] at (1,4) (n14) {};
		\node[roundnode] at (1,3) (n13) {};
		\node[roundnode] at (1,2) (n12) {};
		\node[roundnode] at (1,1) (n11) {};
		
		%Second, Third, Fourth, Fifth, Sixth, Seventh, Eighth,Nineth, Tenth   Columns (invisible nodes)
       		
		\node at (2,4) (n24) {};
		\node at (2,3) (n23) {};
		\node at (2,2) (n22) {};
		\node at (2,1) (n21) {};

		\node at (3,4) (n34) {};
		\node at (3,3) (n33) {};
		\node at (3,2) (n32) {};
		\node at (3,1) (n31) {};

		\node at (4,4) (n44) {};
		\node at (4,3) (n43) {};
		\node at (4,2) (n42) {};
		\node at (4,1) (n41) {};

		\node at (5,4) (n54) {};
		\node at (5,3) (n53) {};
		\node at (5,2) (n52) {};
		\node at (5,1) (n51) {};

		\node at (6,4) (n64) {};
		\node at (6,3) (n63) {};
		\node at (6,2) (n62) {};
		\node at (6,1) (n61) {};	
	
	%Last Column
		
		\node[roundnode] at (6,4) (n64) {};
		\node[roundnode] at (6,3) (n63) {};
		\node[roundnode] at (6,2) (n62) {};
		\node[roundnode] at (6,1) (n61) {};
		
		%Draw Lines connecting the nodes
		\draw[very thick] (n14) -- (n23.center);
		\draw[very thick] (n13) -- (n24.center);
		\draw[very thick] (n12) -- (n22.center);
        \draw[very thick] (n11) -- (n31.center);		
         
        \draw[very thick] (n24.center) -- (n44.center);
        \draw[very thick] (n23.center) -- (n32.center);
        \draw[very thick] (n22.center) -- (n33.center);
        
        \draw[very thick] (n33.center) -- (n43.center);
        \draw[very thick] (n32.center) -- (n41.center);
        \draw[very thick] (n31.center) -- (n42.center);
		
        \draw[very thick] (n44.center) -- (n53.center);
        \draw[very thick] (n43.center) -- (n54.center);
        \draw[very thick] (n42.center) -- (n52.center);
        \draw[very thick] (n41.center) -- (n61.center);

        \draw[very thick] (n54.center) -- (n64.center);
        \draw[very thick] (n52.center) -- (n63.center);
        \draw[very thick] (n53.center) -- (n62.center);	
		
		%Draw Diamonds and Rounds
		\node[bigdiamondnode] at (1.5,3.5) (m1) {};
		\node[bigroundnodew] at (2.5,2.5) (m2) {};
		\node[bigroundnode] at (3.5,1.5) (m3) {};
		\node[bigdiamondnodew] at (4.5,3.5) (m4) {};
		\node[bigroundnode] at (5.5,2.5) (m5) {};

	\end{scope}	
	
	%%Codes for Image 1
	%%Codes for Image 1
	\begin{scope}[shift={(3.5,0)}]
		\draw[ultra thick,fill=white] (0.5,0.5) -- (0.5,4.5) -- (6.5,4.5) -- (6.5,0.5) -- cycle;
				%Define nodes
		
		%First Column
		
		\node[roundnode] at (1,4) (n14) {};
		\node[roundnode] at (1,3) (n13) {};
		\node[roundnode] at (1,2) (n12) {};
		\node[roundnode] at (1,1) (n11) {};
		
		%Second, Third, Fourth, Fifth, Sixth, Seventh, Eighth,Nineth, Tenth   Columns (invisible nodes)
       		
		\node at (2,4) (n24) {};
		\node at (2,3) (n23) {};
		\node at (2,2) (n22) {};
		\node at (2,1) (n21) {};

		\node at (3,4) (n34) {};
		\node at (3,3) (n33) {};
		\node at (3,2) (n32) {};
		\node at (3,1) (n31) {};

		\node at (4,4) (n44) {};
		\node at (4,3) (n43) {};
		\node at (4,2) (n42) {};
		\node at (4,1) (n41) {};

		\node at (5,4) (n54) {};
		\node at (5,3) (n53) {};
		\node at (5,2) (n52) {};
		\node at (5,1) (n51) {};

		\node at (6,4) (n64) {};
		\node at (6,3) (n63) {};
		\node at (6,2) (n62) {};
		\node at (6,1) (n61) {};	
	
	%Last Column
		
		\node[roundnode] at (6,4) (n64) {};
		\node[roundnode] at (6,3) (n63) {};
		\node[roundnode] at (6,2) (n62) {};
		\node[roundnode] at (6,1) (n61) {};
		
		%Draw Lines connecting the nodes
		\draw[very thick] (n14) -- (n23.center);
		\draw[very thick] (n13) -- (n24.center);
		\draw[very thick] (n12) -- (n22.center);
        \draw[very thick] (n11) -- (n31.center);		
         
        \draw[very thick] (n24.center) -- (n44.center);
        \draw[very thick] (n23.center) -- (n32.center);
        \draw[very thick] (n22.center) -- (n33.center);
        
        \draw[very thick] (n33.center) -- (n43.center);
        \draw[very thick] (n32.center) -- (n41.center);
        \draw[very thick] (n31.center) -- (n42.center);
		
        \draw[very thick] (n44.center) -- (n53.center);
        \draw[very thick] (n43.center) -- (n54.center);
        \draw[very thick] (n42.center) -- (n52.center);
        \draw[very thick] (n41.center) -- (n61.center);

        \draw[very thick] (n54.center) -- (n64.center);
        \draw[very thick] (n52.center) -- (n63.center);
        \draw[very thick] (n53.center) -- (n62.center);	
		
		%Draw Diamonds and Rounds
		\node[bigroundnodew] at (1.5,3.5) (m1) {};
		\node[bigroundnode] at (2.5,2.5) (m2) {};
		\node[bigroundnode] at (3.5,1.5) (m3) {};
		\node[bigroundnode] at (4.5,3.5) (m4) {};
		\node[bigroundnode] at (5.5,2.5) (m5) {};

	\end{scope}

	%%%%%%%%%%%%%%%%%%%%%%%%%%%%%%%%%%%%%%%%%%%%%%%%%
	%%%%%%%%%%%%%%%%%%%%%%%%%%%%%%%%%%%%%%%%%%%%%%%%%
	%%%%%%%%%%%%%%%%%%%%%%%%%%%%%%%%%%%%%%%%%%%%%%%%%
	
\end{tikzpicture}}

%\end{center}
\caption{A CW complex $\BLC_{z_1}$ which is homotopically equivalent to $\BL_{z_1}$.}
\label{fig:4312xx}
\end{figure}

The construction for $\BLC_{-z_1}$ is completely similar. 
Therefore $\BL_\sigma$ has $16$ connected components, all contractible.
\end{example}

\bigskip

Ancestries and strata of dimension $2$ also admit
a not too complicated description.
Understanding them allows us to compute
the fundamental group of each connected component.
For small values of $n$, when there are no ancestries (or strata)
of dimension $3$ or higher, we obtain a full description
of the spaces, in many cases allowing us to deduce
that their connected components are contractible.

Let $\varepsilon_0$ be a preancestry of dimension $2$ and $k_1 < k_2 < k_3 < k_4$, $|\varepsilon_0(k_i)|=2$. 
We prefer to break into cases, that we call type I and type II.
In any case, we have $\varepsilon_0(k_1)=-2$ and $\varepsilon_0(k_4)=+2$. 
An ancestry $\varepsilon$ of dimension $2$ is said to be of type I
when one of the following situations happens: 
if $\varepsilon_0(k_2)=+2$ then $\varepsilon_0(k_3)=-2$ and
$i_{k_1} = i_{k_2}$, \,
$i_{k_3} = i_{k_4}$; or
if $\varepsilon_0(k_2)=-2$ and $|i_{k_1} - i_{k_2}| > 1$, 
in this case the preancestry also has two pairs of consecutive intersections on two rows. 
In Figure~\ref{fig:types}, first and second diagrams are examples of preancestries of dimension $2$ of type I. 
An ancestry $\varepsilon_0$ of dimension $2$ is said to be of type II if $\varepsilon_0(k_2)=-2$ and
$|i_{k_1} - i_{k_2}| = 1$. In this case we have $i_{k_1} = i_{k_4}$, 
$i_{k_2} = i_{k_3}$; see third diagram of Figure~\ref{fig:types}.
A preancestry of dimension $2$ 
is either of type I or of type II.

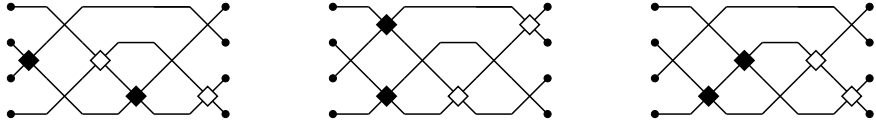
\begin{figure}[ht!]
\centering
\resizebox{0.8\textwidth}{!}{
\begin{tikzpicture}[roundnode/.style={circle, draw=black, fill=black, minimum size=0.1mm,inner sep=2pt},
squarednode/.style={rectangle, draw=red!60, fill=red!5, very thick, minimum size=5mm},
bigdiamondnode/.style={draw,diamond, fill=black, minimum size=1mm,very thick,inner sep=4pt},
diamondnodew/.style={draw,diamond, fill=white, minimum size=1mm,very thick,inner sep=4pt},
bigdiamondnodew/.style={draw,diamond, fill=white, minimum size=1mm,very thick, inner sep=4pt},
bigroundnode/.style={circle, draw=black, fill=black, minimum size=0.1mm,very thick, inner sep=4pt},
bigroundnodew/.style={circle, draw=black, fill=white, minimum size=0.1mm,very thick, inner sep=4pt},]
\begin{scope}[shift={(-9.5,0)}]
		%\draw[ultra thick,fill=white] (0.5,0.5) -- (0.5,4.5) -- (5.5,4.5) -- (5.5,0.5) -- cycle;
				%Define nodes
		%First Column
		%\node[roundnode] at (1,5) (n15) {};
		\node[roundnode] at (1,4) (n14) {};
		\node[roundnode] at (1,3) (n13) {};
		\node[roundnode] at (1,2) (n12) {};
		\node[roundnode] at (1,1) (n11) {};
		
		%Second, Third, Fourth, Fifth, Sixth, Seventh, Eighth,Nineth, Tenth   Columns (invisible nodes)
     %   \node at (2,5) (n25) {};		
		\node at (2,4) (n24) {};
		\node at (2,3) (n23) {};
		\node at (2,2) (n22) {};
		\node at (2,1) (n21) {};

		%\node at (3,5) (n35) {};
		\node at (3,4) (n34) {};
		\node at (3,3) (n33) {};
		\node at (3,2) (n32) {};
		\node at (3,1) (n31) {};
		
     %   \node at (4,5) (n45) {};		
		\node at (4,4) (n44) {};
		\node at (4,3) (n43) {};
		\node at (4,2) (n42) {};
		\node at (4,1) (n41) {};

		%\node at (5,5) (n55) {};		
		\node at (5,4) (n54) {};
		\node at (5,3) (n53) {};
		\node at (5,2) (n52) {};
		\node at (5,1) (n51) {};
		
		%\node at (6,5) (n65) {};		
		\node at (6,4) (n64) {};
		\node at (6,3) (n63) {};
		\node at (6,2) (n62) {};
		\node at (6,1) (n61) {};

		%Seventh Column
		
		%\node[roundnode] at (7,5) (n75) {};
		\node[roundnode] at (7,4) (n74) {};
		\node[roundnode] at (7,3) (n73) {};
		\node[roundnode] at (7,2) (n72) {};
		\node[roundnode] at (7,1) (n71) {};
		
		%Draw Lines connecting the nodes
		%\draw[very thick] (n15) -- (n25.center);
		\draw[very thick] (n14) -- (n24.center);
		\draw[very thick] (n13) -- (n22.center);
		\draw[very thick] (n12) -- (n23.center);
        \draw[very thick] (n11) -- (n21.center);		
        
     %   \draw[very thick] (n25.center) -- (n34.center);
        \draw[very thick] (n24.center) -- (n33.center);
        \draw[very thick] (n23.center) -- (n34.center);
        \draw[very thick] (n22.center) -- (n31.center);
        \draw[very thick] (n21.center) -- (n32.center);
        
      %  \draw[very thick] (n35.center) -- (n65.center);
        \draw[very thick] (n34.center) -- (n64.center);
        \draw[very thick] (n33.center) -- (n42.center);
        \draw[very thick] (n32.center) -- (n43.center);
        \draw[very thick] (n31.center) -- (n41.center);
		
        %\draw[very thick] (n45.center) -- (n85.center);
        %\draw[very thick] (n44.center) -- (n54.center);
        \draw[very thick] (n43.center) -- (n53.center);
        \draw[very thick] (n42.center) -- (n51.center);
       \draw[very thick] (n41.center) -- (n52.center);		
			
			  %\draw[very thick] (n45.center) -- (n85.center);
        \draw[very thick] (n54.center) -- (n64.center);
        \draw[very thick] (n53.center) -- (n62.center);
        \draw[very thick] (n52.center) -- (n63.center);
       \draw[very thick] (n51.center) -- (n61.center);	
        
				 \draw[very thick] (n64.center) -- (n73.center);
        \draw[very thick] (n63.center) -- (n74.center);
        \draw[very thick] (n62.center) -- (n71.center);
        \draw[very thick] (n61.center) -- (n72.center);
      % \draw[very thick] (n61.center) -- (n72.center);	

		\node[bigdiamondnode] at (1.5,2.5) (m1) {};
		%\node[bigroundnode] at (2.5,1.5) (m2) {};
		\node[bigdiamondnodew] at (3.5,2.5) (m3) {};
		\node[bigdiamondnode] at (4.5,1.5) (m4) {};
	%\node[bigdiamondnodew] at (5.5,2.5) (m5) {};
		\node[bigdiamondnodew] at (6.5,1.5) (m6) {};
	
	\end{scope}

	%%Codes for Image 1
		\begin{scope}[shift={(-0.5,0)}]
			%\draw[ultra thick,fill=white] (0.5,0.5) -- (0.5,4.5) -- (5.5,4.5) -- (5.5,0.5) -- cycle;
				%Define nodes
		%First Column
		%\node[roundnode] at (1,5) (n15) {};
		\node[roundnode] at (1,4) (n14) {};
		\node[roundnode] at (1,3) (n13) {};
		\node[roundnode] at (1,2) (n12) {};
		\node[roundnode] at (1,1) (n11) {};
		
		%Second, Third, Fourth, Fifth, Sixth, Seventh, Eighth,Nineth, Tenth   Columns (invisible nodes)
     %   \node at (2,5) (n25) {};		
		\node at (2,4) (n24) {};
		\node at (2,3) (n23) {};
		\node at (2,2) (n22) {};
		\node at (2,1) (n21) {};

		%\node at (3,5) (n35) {};
		\node at (3,4) (n34) {};
		\node at (3,3) (n33) {};
		\node at (3,2) (n32) {};
		\node at (3,1) (n31) {};
		
     %   \node at (4,5) (n45) {};		
		\node at (4,4) (n44) {};
		\node at (4,3) (n43) {};
		\node at (4,2) (n42) {};
		\node at (4,1) (n41) {};

		%\node at (5,5) (n55) {};		
		\node at (5,4) (n54) {};
		\node at (5,3) (n53) {};
		\node at (5,2) (n52) {};
		\node at (5,1) (n51) {};
		
		%\node at (6,5) (n65) {};		
		\node at (6,4) (n64) {};
		\node at (6,3) (n63) {};
		\node at (6,2) (n62) {};
		\node at (6,1) (n61) {};

		%Seventh Column
		
		%\node[roundnode] at (7,5) (n75) {};
		\node[roundnode] at (7,4) (n74) {};
		\node[roundnode] at (7,3) (n73) {};
		\node[roundnode] at (7,2) (n72) {};
		\node[roundnode] at (7,1) (n71) {};
		
		%Draw Lines connecting the nodes
		%\draw[very thick] (n15) -- (n25.center);
		\draw[very thick] (n14) -- (n24.center);
		\draw[very thick] (n13) -- (n22.center);
		\draw[very thick] (n12) -- (n23.center);
        \draw[very thick] (n11) -- (n21.center);		
        
     %   \draw[very thick] (n25.center) -- (n34.center);
        \draw[very thick] (n24.center) -- (n33.center);
        \draw[very thick] (n23.center) -- (n34.center);
        \draw[very thick] (n22.center) -- (n31.center);
        \draw[very thick] (n21.center) -- (n32.center);
        
      %  \draw[very thick] (n35.center) -- (n65.center);
        \draw[very thick] (n34.center) -- (n64.center);
        \draw[very thick] (n33.center) -- (n42.center);
        \draw[very thick] (n32.center) -- (n43.center);
        \draw[very thick] (n31.center) -- (n41.center);
		
        %\draw[very thick] (n45.center) -- (n85.center);
        %\draw[very thick] (n44.center) -- (n54.center);
        \draw[very thick] (n43.center) -- (n53.center);
        \draw[very thick] (n42.center) -- (n51.center);
       \draw[very thick] (n41.center) -- (n52.center);		
			
			  %\draw[very thick] (n45.center) -- (n85.center);
        \draw[very thick] (n54.center) -- (n64.center);
        \draw[very thick] (n53.center) -- (n62.center);
        \draw[very thick] (n52.center) -- (n63.center);
       \draw[very thick] (n51.center) -- (n61.center);	
        
				 \draw[very thick] (n64.center) -- (n73.center);
        \draw[very thick] (n63.center) -- (n74.center);
        \draw[very thick] (n62.center) -- (n71.center);
        \draw[very thick] (n61.center) -- (n72.center);
      % \draw[very thick] (n61.center) -- (n72.center);	

		%\node[bigdiamondnode] at (1.5,2.5) (m1) {};
		\node[bigdiamondnode] at (2.5,1.5) (m2) {};
		\node[bigdiamondnode] at (2.5,3.5) (m3) {};
		\node[bigdiamondnodew] at (4.5,1.5) (m4) {};
	%\node[bigdiamondnodew] at (5.5,2.5) (m5) {};
		\node[bigdiamondnodew] at (6.5,3.5) (m6) {};
	\end{scope}
	
	%%Codes for Image 1
	\begin{scope}[shift={(8.5,0)}]
		%\draw[ultra thick,fill=white] (0.5,0.5) -- (0.5,4.5) -- (5.5,4.5) -- (5.5,0.5) -- cycle;
				%Define nodes
		%First Column
		%\node[roundnode] at (1,5) (n15) {};
		\node[roundnode] at (1,4) (n14) {};
		\node[roundnode] at (1,3) (n13) {};
		\node[roundnode] at (1,2) (n12) {};
		\node[roundnode] at (1,1) (n11) {};
		
		%Second, Third, Fourth, Fifth, Sixth, Seventh, Eighth,Nineth, Tenth   Columns (invisible nodes)
     %   \node at (2,5) (n25) {};		
		\node at (2,4) (n24) {};
		\node at (2,3) (n23) {};
		\node at (2,2) (n22) {};
		\node at (2,1) (n21) {};

		%\node at (3,5) (n35) {};
		\node at (3,4) (n34) {};
		\node at (3,3) (n33) {};
		\node at (3,2) (n32) {};
		\node at (3,1) (n31) {};
		
     %   \node at (4,5) (n45) {};		
		\node at (4,4) (n44) {};
		\node at (4,3) (n43) {};
		\node at (4,2) (n42) {};
		\node at (4,1) (n41) {};

		%\node at (5,5) (n55) {};		
		\node at (5,4) (n54) {};
		\node at (5,3) (n53) {};
		\node at (5,2) (n52) {};
		\node at (5,1) (n51) {};
		
		%\node at (6,5) (n65) {};		
		\node at (6,4) (n64) {};
		\node at (6,3) (n63) {};
		\node at (6,2) (n62) {};
		\node at (6,1) (n61) {};

		%Seventh Column
		
		%\node[roundnode] at (7,5) (n75) {};
		\node[roundnode] at (7,4) (n74) {};
		\node[roundnode] at (7,3) (n73) {};
		\node[roundnode] at (7,2) (n72) {};
		\node[roundnode] at (7,1) (n71) {};
		
		%Draw Lines connecting the nodes
		%\draw[very thick] (n15) -- (n25.center);
		\draw[very thick] (n14) -- (n24.center);
		\draw[very thick] (n13) -- (n22.center);
		\draw[very thick] (n12) -- (n23.center);
        \draw[very thick] (n11) -- (n21.center);		
        
     %   \draw[very thick] (n25.center) -- (n34.center);
        \draw[very thick] (n24.center) -- (n33.center);
        \draw[very thick] (n23.center) -- (n34.center);
        \draw[very thick] (n22.center) -- (n31.center);
        \draw[very thick] (n21.center) -- (n32.center);
        
      %  \draw[very thick] (n35.center) -- (n65.center);
        \draw[very thick] (n34.center) -- (n64.center);
        \draw[very thick] (n33.center) -- (n42.center);
        \draw[very thick] (n32.center) -- (n43.center);
        \draw[very thick] (n31.center) -- (n41.center);
		
        %\draw[very thick] (n45.center) -- (n85.center);
        %\draw[very thick] (n44.center) -- (n54.center);
        \draw[very thick] (n43.center) -- (n53.center);
        \draw[very thick] (n42.center) -- (n51.center);
       \draw[very thick] (n41.center) -- (n52.center);		
			
			  %\draw[very thick] (n45.center) -- (n85.center);
        \draw[very thick] (n54.center) -- (n64.center);
        \draw[very thick] (n53.center) -- (n62.center);
        \draw[very thick] (n52.center) -- (n63.center);
       \draw[very thick] (n51.center) -- (n61.center);	
        
				 \draw[very thick] (n64.center) -- (n73.center);
        \draw[very thick] (n63.center) -- (n74.center);
        \draw[very thick] (n62.center) -- (n71.center);
        \draw[very thick] (n61.center) -- (n72.center);
      % \draw[very thick] (n61.center) -- (n72.center);	

		\node[bigdiamondnode] at (2.5,1.5) (m1) {};
		%\node[bigroundnode] at (2.5,1.5) (m2) {};
		\node[bigdiamondnode] at (3.5,2.5) (m3) {};
		\node[bigdiamondnodew] at (5.5,2.5) (m4) {};
	%\node[bigdiamondnodew] at (5.5,2.5) (m5) {};
		\node[bigdiamondnodew] at (6.5,1.5) (m6) {};

	\end{scope}
\end{tikzpicture}}
\caption{Here we have three preancestries of dimension $2$: on the left and on the central are of type I and on the right of type II.}
\label{fig:types}
\end{figure}

\begin{example}
\label{example:45132-b}

Set $\sigma = a_2 a_3 a_1 a_2 a_4 a_3 a_2 = [4 5 1 3 2]$. 
In the notation of cycles, $\sigma = (143)(25)$;
therefore $n = 4$, $\ell = 7$, $c = 2$ and $b = 0$.
It follows from Equation~\ref{theo:N} that for $z \in \acute\sigma \Quat_5$,
we have $N(z) = 4 + 4\sqrt{2} \Re(z)$.
In addition,
\[ \acute\sigma = 
\frac{-1 + \hat a_1\hat a_2
+ \hat a_3 - \hat a_1\hat a_2\hat a_3
+ \hat a_1\hat a_4 + \hat a_2\hat a_4
-\hat a_1 \hat a_3 \hat a_4 - \hat a_2\hat a_3\hat a_4}{2\sqrt{2}}, \]
%and $\Re(\pm\hat a_1\acute\sigma) = \pm\sqrt{2}/4$.
therefore $z \in \acute\sigma \Quat_5$ implies
$\Re(z) \in \{0, \pm \sqrt{2}/4\}$.
 %for instance,

Next we chose the representatives of interest in each orbit.
If $z \in \Pi^{-1}[\{ \sigma \}]$ and $\Re(z) \neq 0 $ then $c_{\anti}(z)=0$;
if $\Re(z) = 0$ then $c_{\anti}(z) = 1$.
Therefore the set $\acute\sigma \Quat_5$ thus has $3$ orbits under $\cE_4$,
determined by the real part,
of sizes $8$, $16$ and $8$:
\begin{gather*}
\cO_{\acute\sigma},  \quad \Re(z) = -\frac{\sqrt{2}}{4}, \quad
N(z) =2, \quad N_{\thin}(z) = 2,  \\
\cO_{\hat a_1\acute\sigma}, \quad \Re(z) = 0, \quad
N(z) = 4, \quad N_{\thin}(z) = 0, \\
\cO_{-\acute\sigma}, \quad \Re(z) = \frac{\sqrt{2}}{4}, \quad
N(z) = 6, \quad N_{\thin}(z) = 0.
\end{gather*}

If $\Re(z)=-\frac{\sqrt{2}}{4}$, the set $\BL_z$ has two thin connected components 
(and no thick one). 
%We notice the second possibility discussed in the Example~\ref{example:45132-a1} the stratification of 
%$\BL_{\hat a_1 \acute \sigma}$ which is connected and contractible. 
Moreover, the stratification of $\BL_{\hat a_1 \acute \sigma}$ is discussed in the Example~\ref{example:45132-a1};
we recall it is connected and contractible.
Now we detail the stratification of $\BL_{-\acute{\sigma}}$.
We start with the graphical representation in Figure~\ref{fig:45132xx}.
%Figure \ref{fig:45132xx} contains a stratification of $\BL_{-\acute{\sigma}}$ and this set is also connected.

\begin{figure}[ht]
\centering

%%%%%%%%%%%%%%%%%%%%%%%%%%%%%%%%%%%%%%%
%%%%%%%%%%%%%%%%%%%%%%%%%%%%%%%%%%%%%%
%%%%%%%%%%%%%%%%%%%%%%%%%%%%%%%%%%%%Third Page 45132
\resizebox{0.4\textwidth}{!}{
% [inline block 0: 1 envs, 25224 chars -> data_tex | \begin{tikzpicture}[roundnode/.style={circle, draw=black, fill=black, minimum size=0.1mm,inner sep=2pt}, squarednode/.st...]
}

%\end{center}
\caption{The stratification of $\BL_{-\acute{\sigma}}$.}
\label{fig:45132xx}
\end{figure}

In $\BL_{- \acute{\sigma}}$ there are $6$ strata of dimension $0$,
$6$ strata of dimension $1$ and
exactly one stratum of dimension $2$:
with ancestry $\varepsilon = (-2,-2,-1,-1,+1,+2,+2)$
and corresponding $\xi$-ancestry $\xi = (1,1,0,2,0,1,1)$.
The ancestry $\varepsilon$ is of type II.

To get the stratification above we perform the computations in the orthogonal group.
In Section~\ref{section:preliminaries} we defined the one parameter subgroups
$\alpha_i:  \RR \to \SO_{n+1}$, given by

\[
\alpha_i(\theta) =
\begin{pmatrix} I & & & \\
& \cos(\theta) & -\sin(\theta) & \\
& \sin(\theta) & \cos(\theta) & \\
& & & I \end{pmatrix},\]
where the central block ocupies rows and columns
$i$ and $i+1$. 
To make computations more algebraic, let $t = \tan\left(\frac{\theta}{2}\right)$.
Therefore 

\[
\zeta_i(t)=\alpha_i(2\arctan(t))=
\begin{pmatrix} I & & & \\
& \frac{1-t^2}{1+t^2} & -\frac{2t}{1+t^2} & \\\\
& \frac{2t}{1+t^2} & \frac{1-t^2}{1+t^2} & \\
& & & I \end{pmatrix}.\]

In order to study a transversal section to $\BL_{\varepsilon}$,
we take
\[ z_{7} = \zeta_2(-1+x_1) \zeta_3(-1+x_2) \zeta_1(-1/2) \zeta_2(-1/2) \zeta_4(1/2) \zeta_3(1/2) \zeta_2(1/2).\]
In order to determine the position
of a point in the strata above,
we must study the signs of 
\begin{gather*}
p_1(x_1, x_2)=x_1, \quad p_2(x_2, x_2)=x_2 \\
p_3(x_1,x_2)=5 x^2_1 x_2^2 - 10 x_1^2 x_2 - 2 x_1 x_2^2 + 10 x_1^2 + 4 x_1 x_2 - 8 x_2^2 - 20 x_1 + 16 x_2.
\end{gather*}

These three expressions have pairwise linearly independent deritatives
in the origin.
The transversal section is shown in Figure~\ref{fig:codim2II3}.
Thus, in the CW complex shown in Figure~\ref{fig:45132xx},
the cell of dimension $2$ glues in the obvious way.
The set $\BL_{-\hat a_1\acute\eta}$ is therefore contractible.

\begin{figure}[ht]
\begin{center}
\includegraphics[scale=0.5]{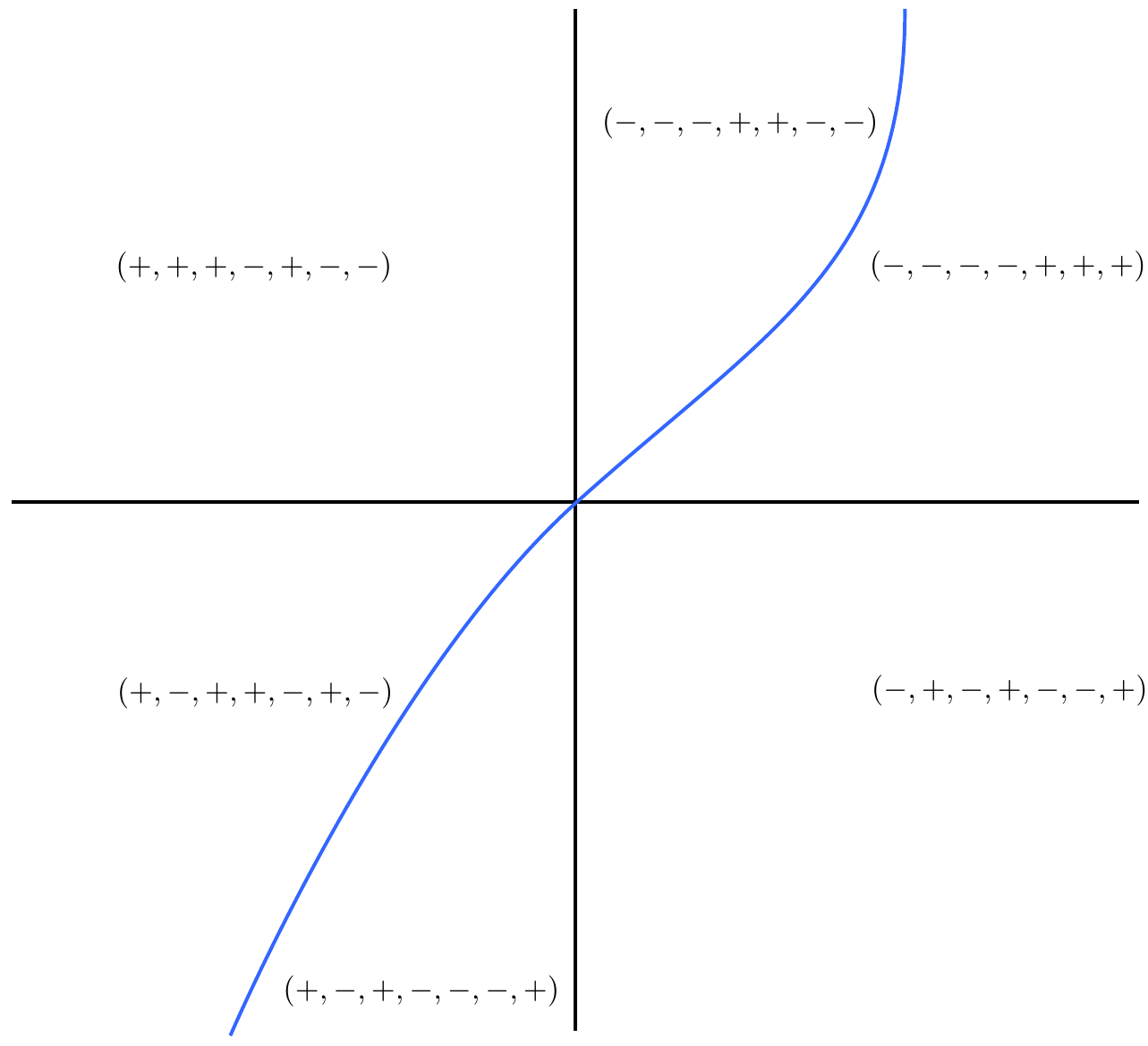}
\end{center}
\caption{A transversal section to a stratum of dimension $2$.}
\label{fig:codim2II3}
\end{figure}

%\begin{figure}[ht]
%\centering
%\def\svgwidth{75mm}
%\input{codim2II3.pdf_tex}
%\caption{A transversal section to a stratum of codimension $2$, subtype II-$3$.}
%\label{fig:codim2II3}
%\end{figure}

Summing up, $\BL_{\sigma}$ has $40$ connected components, 
all contractible.
\end{example}

%%%%%%%%%%%%%%%%%%%%%%%%%%%%%%%%%%%%%%%%%%%%%%%%%%%%%%%%%%%%%%%%%%%%%%%%%%

%\section{Examples}

%\subsection{The case $n = 3$}

%\subsection{The case $n = 4$}
\label{section:n4}

\section{Higher dimension strata}
\label{section:codimtwo}

In this section we will present examples that contain 
higher dimensional strata.
We will not be presenting theorems in the most general form but we hope through examples 
to convey the ideas of the theory.
The last example we present is the most important for $n=4$ and the computations help to clarify the theory;
in this example there appear strata of dimension up to $4$. \\

\begin{example}

%%%%%%%%%%%%%%%%%%%%%%%%%%%%%%%%%%%%%%%%%%% sigma = (43521)

Consider $\sigma = [4 3 5 2 1]$. Let us fixed the reduced word: $\sigma=a_1 a_3 a_2 a_1 a_4 a_3 a_2 a_1$ 
(see Figure~\ref{fig:43521}).

%%%%%%%%%%%%%%%
%%%%% First Page 43521%%%%%
%%%%%%%%%%%%%%%

%\bigskip

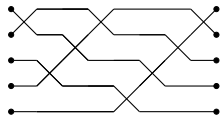
\begin{figure}[h!]
\centering
\resizebox{0.2\textwidth}{!}{
\begin{tikzpicture}[roundnode/.style={circle, draw=black, fill=black, minimum size=0.1mm,inner sep=2pt},
squarednode/.style={rectangle, draw=red!60, fill=red!5, very thick, minimum size=5mm},
diamondnode/.style={draw,diamond, fill=black, minimum size=1mm,very thick,inner sep=4pt},
diamondnodew/.style={draw,diamond, fill=white, minimum size=1mm,very thick,inner sep=4pt},
bigroundnode/.style={circle, draw=black, fill=black, minimum size=0.1mm,very thick, inner sep=4pt},
bigroundnodew/.style={circle, draw=black, fill=white, minimum size=0.1mm,very thick, inner sep=4pt},]

	%\bigskip
	
	%%Codes for Image 1
	\begin{scope}[shift={(-5.5,0)}]
		%\draw[ultra thick,fill=white] (0.5,0.5) -- (0.5,4.5) -- (5.5,4.5) -- (5.5,0.5) -- cycle;
				%Define nodes
		%First Column
		\node[roundnode] at (1,5) (n15) {};
		\node[roundnode] at (1,4) (n14) {};
		\node[roundnode] at (1,3) (n13) {};
		\node[roundnode] at (1,2) (n12) {};
		\node[roundnode] at (1,1) (n11) {};
		
		%Second, Third, Fourth, Fifth, Sixth, Seventh, Eighth   Columns (invisible nodes)
        \node at (2,5) (n25) {};		
		\node at (2,4) (n24) {};
		\node at (2,3) (n23) {};
		\node at (2,2) (n22) {};
		\node at (2,1) (n21) {};

		\node at (3,5) (n35) {};
		\node at (3,4) (n34) {};
		\node at (3,3) (n33) {};
		\node at (3,2) (n32) {};
		\node at (3,1) (n31) {};
		
        \node at (4,5) (n45) {};		
		\node at (4,4) (n44) {};
		\node at (4,3) (n43) {};
		\node at (4,2) (n42) {};
		\node at (4,1) (n41) {};

		\node at (5,5) (n55) {};		
		\node at (5,4) (n54) {};
		\node at (5,3) (n53) {};
		\node at (5,2) (n52) {};
		\node at (5,1) (n51) {};
		
		\node at (6,5) (n65) {};		
		\node at (6,4) (n64) {};
		\node at (6,3) (n63) {};
		\node at (6,2) (n62) {};
		\node at (6,1) (n61) {};

		\node at (7,5) (n75) {};		
		\node at (7,4) (n74) {};
		\node at (7,3) (n73) {};
		\node at (7,2) (n72) {};
		\node at (7,1) (n71) {};

		\node at (8,5) (n85) {};		
		\node at (8,4) (n84) {};
		\node at (8,3) (n83) {};
		\node at (8,2) (n82) {};
		\node at (8,1) (n81) {};
		%Ninth Column
		\node[roundnode] at (9,5) (n95) {};
		\node[roundnode] at (9,4) (n94) {};
		\node[roundnode] at (9,3) (n93) {};
		\node[roundnode] at (9,2) (n92) {};
		\node[roundnode] at (9,1) (n91) {};
		
		%Draw Lines connecting the nodes
		\draw[very thick] (n15) -- (n24.center);
		\draw[very thick] (n14) -- (n25.center);
		\draw[very thick] (n13) -- (n23.center);
		\draw[very thick] (n12) -- (n22.center);
        \draw[very thick] (n11) -- (n21.center);		
        
        \draw[very thick] (n25.center) -- (n45.center);
        \draw[very thick] (n24.center) -- (n34.center);
        \draw[very thick] (n23.center) -- (n32.center);
        \draw[very thick] (n22.center) -- (n33.center);
        \draw[very thick] (n21.center) -- (n51.center);
        
        %\draw[very thick] (n35.center) -- (n45.center);
        \draw[very thick] (n34.center) -- (n43.center);
        \draw[very thick] (n33.center) -- (n44.center);
        \draw[very thick] (n32.center) -- (n52.center);
        %\draw[very thick] (n21.center) -- (n51.center);
		
        \draw[very thick] (n25.center) -- (n45.center);
        \draw[very thick] (n24.center) -- (n34.center);
        \draw[very thick] (n23.center) -- (n32.center);
        \draw[very thick] (n22.center) -- (n33.center);
        \draw[very thick] (n21.center) -- (n51.center);

        \draw[very thick] (n32.center) -- (n52.center);
        
        \draw[very thick] (n33.center) -- (n44.center);
        
        \draw[very thick] (n34.center) -- (n43.center);	
        
        \draw[very thick] (n35.center) -- (n45.center);	
        
        \draw[very thick] (n42.center) -- (n52.center);
        
        \draw[very thick] (n43.center) -- (n63.center);
        
        \draw[very thick] (n44.center) -- (n55.center);			
        
        \draw[very thick] (n45.center) -- (n54.center);
        
        \draw[very thick] (n51.center) -- (n62.center);
        
        \draw[very thick] (n52.center) -- (n61.center);	
        
        \draw[very thick] (n53.center) -- (n63.center);	        
        				
		\draw[very thick] (n54.center) -- (n74.center);	
		
		\draw[very thick] (n55.center) -- (n85.center);

	    \draw[very thick] (n61.center) -- (n91.center);
	    
	    \draw[very thick] (n62.center) -- (n73.center);	
	    
	    \draw[very thick] (n63.center) -- (n72.center);
	    
	    \draw[very thick] (n62.center) -- (n73.center);	
	    
	    \draw[very thick] (n64.center) -- (n74.center);
	    
	    \draw[very thick] (n72.center) -- (n92.center);
	    
	    \draw[very thick] (n73.center) -- (n84.center);	
	    
	    \draw[very thick] (n74.center) -- (n83.center);	
	    
	    \draw[very thick] (n83.center) -- (n93.center);	
	    
	    \draw[very thick] (n85.center) -- (n94.center);
	    
	    \draw[very thick] (n84.center) -- (n95.center);

	\end{scope}
\end{tikzpicture}	}
\caption{The permutation $\sigma \in S_5$.}
\label{fig:43521}
\end{figure} 

\medskip

We have $\ell = 8$, $b = |\Block(\sigma)| = 0$, $c = \nc(\sigma) = 1$ and
\begin{gather*}
\acute{\sigma}=
\frac{1}{4} \bigg(
-1 - \hat a_1 + \hat a_2  - \hat a_1 \hat a_2 - \hat a_3 + \hat a_1 \hat a_3
- \hat a_2 \hat a_3 - \hat a_1 \hat a_2 \hat a_3 - \hat a_4 \qquad \qquad \\ \qquad \qquad
+ \hat a_1 \hat a_4 + \hat a_2 \hat a_4 + \hat a_1 \hat a_2 \hat a_4
- \hat a_3 \hat a_4  - \hat a_1 \hat a_3 \hat a_4 - \hat a_2 \hat a_3 \hat a_4
+ \hat a_1 \hat a_2 \hat a_3 \hat a_4
\bigg).
\end{gather*}

%%%%%%%%%%%%%%%%%%%%%%%%%%%%%%%%%%%%%%%%%%%%%%%%%%%%%%%%%%%%

%\[ \acute\eta = - \grave\eta = 
% \frac{-\hat a_1-\hat a_1\hat a_2\hat a_3-\hat a_4-\hat a_2\hat a_3\hat a_4}{2},
%
% \Pi^{-1} [\{ \eta \}] = 
% \bigg\{
% \frac{\pm 1 \pm \hat a_2\hat a_3 \pm \hat a_1\hat a_4 \pm \hat a_1\hat a_2\hat a_3\hat a_4}{2},
% \frac{\pm \hat a_1 \pm \hat a_1\hat a_2\hat a_3 \pm \hat a_4 \pm \hat a_2\hat a_3\hat a_4}{2},
% \\ \qquad\qquad\qquad
% \frac{\pm \hat a_2 \pm \hat a_3 \pm \hat a_1\hat a_2\hat a_4 \pm \hat a_1\hat a_3\hat a_4}{2},
% \frac{\pm \hat a_1 \hat a_2 \pm \hat a_1 \hat a_3\pm \hat a_2\hat a_4 \pm \hat a_1\hat a_4}{2}
% \bigg\}\]
%%
% \end{gather*} 
% where we must take an even number of `$-$' signs
 %(so that $|\Pi^{-1}[\{\sigma\}]| = 32$).
%(FALTA)

It follows from Equation~\ref{theo:N} that
$N(z) = 8 + 8 \Re(z)$.
In this example, it turns out that $\Pi^{-1}[\{\sigma\}]$ contains
$16$ elements with $\Re(z) = \frac14$ and
$16$ elements with $\Re(z) = -\frac14$.

It turns out that, for all $z \in \Pi^{-1}[\{\sigma\}]$, $\Re(z) \neq 0$ and then $c_{\anti}(z) = 1$.
The set $\Pi^{-1}[\{\sigma\}]$ thus has $2$ orbits both of $16$ sizes.

\begin{align*}
\cO_{\acute\sigma},
\quad \Re(z)=-\frac14, \quad N(z) = 6, \quad  
\quad N_{\thin}(z) = 1,  \\
\cO_{\hat a_1 \hat a_2 \hat a_3 \acute{\sigma}},\quad \Re(z)=\frac14, \quad N(z) = 10, 
\quad N_{\thin}(z) = 0,  \\
\end{align*}

The set $\BL_{\hat a_1 \hat a_2 \hat a_3 \acute\sigma}$ is connected.
But the set $\BL_{\acute\sigma}$ has two connected components, the thick part of $\BL_{ \acute\sigma}$ is also connected.
In Figure~\ref{fig:54231BLacutesigma} and Figure~\ref{fig:54231BLacuteminussigma} we draw the CW complex for one representative for each orbit. 
The total number of connected components of $\BL_{\sigma}$ is therefore $48$.
Moreover all these connected components are contractible. 
\end{example}

%\bigskip

\begin{figure}[h!]
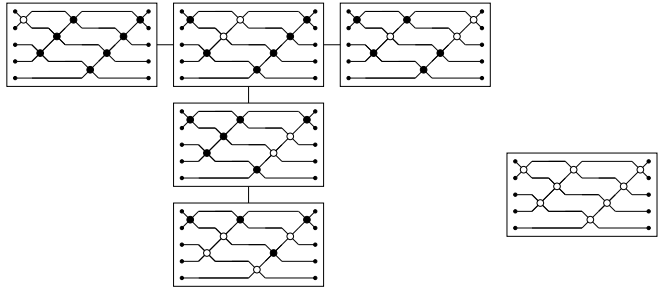


\centering
    \resizebox{0.6\textwidth}{!}{
%resizebox{ NUMERO \textwidth}
% [inline block 1: 2 envs, 95681 chars -> data_tex | \begin{tikzpicture}[roundnode/.style={circle, draw=black, fill=black, minimum size=0.1mm,inner sep=2pt}, squarednode/.st...]
}
\caption{The CW complex $\BLC_{\acute{\sigma}}$.}
\label{fig:54231BLacuteminussigma}
\end{figure}

\newpage

%%%%%%%%%%%%%%%%%%%%%%%%%%%%%%%%%%%%%%%%%%%%%%%%%%%%55

\begin{example}
\label{example:54321}
%%%%%%%%%%%%%%%%%%%%%%%%%%%%%%%%%%%%%%%
%%%%%%%%%%%%%%%%%%%%%%%%%%%%%%%%%%%%%%
%%%%%%%%%%%%%%%%%%%%%%%%%%%%%%%%%%%% First Page 54321

%%%%%%%%%%%%%%%%%%%%%%%%%%%%%%%%%%%%%%%%%% \eta 
Set  $\sigma = \eta = a_1a_2a_1a_3a_2a_1a_4a_3a_2a_1$; Figure \ref{diagrampermutation54321} shows this reduced word as a diagram.

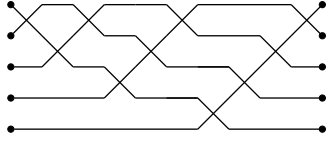
\begin{figure}[ht!]
\centering
 \resizebox{0.30\textwidth}{!}{

\begin{tikzpicture}[roundnode/.style={circle, draw=black, fill=black, minimum size=0.1mm,inner sep=2pt},
squarednode/.style={rectangle, draw=red!60, fill=red!5, very thick, minimum size=5mm},
diamondnode/.style={draw,diamond, fill=black, minimum size=1mm,very thick,inner sep=4pt},
diamondnodew/.style={draw,diamond, fill=white, minimum size=1mm,very thick,inner sep=4pt},
bigroundnode/.style={circle, draw=black, fill=black, minimum size=0.1mm,very thick, inner sep=4pt},
bigroundnodew/.style={circle, draw=black, fill=white, minimum size=0.1mm,very thick, inner sep=4pt},]

	%%Line connecting Image 1 to Image 2
	%\draw[ultra thick] (-3,2.5) -- (9,2.5);
	
	%%Codes for Image 1
	\begin{scope}[shift={(-5.5,0)}]
		%\draw[ultra thick,fill=white] (0.5,0.5) -- (0.5,4.5) -- (5.5,4.5) -- (5.5,0.5) -- cycle;
				%Define nodes
		%First Column
		\node[roundnode] at (1,5) (n15) {};
		\node[roundnode] at (1,4) (n14) {};
		\node[roundnode] at (1,3) (n13) {};
		\node[roundnode] at (1,2) (n12) {};
		\node[roundnode] at (1,1) (n11) {};
		
		%Second, Third, Fourth, Fifth, Sixth, Seventh, Eighth,Nineth, Tenth   Columns (invisible nodes)
        \node at (2,5) (n25) {};		
		\node at (2,4) (n24) {};
		\node at (2,3) (n23) {};
		\node at (2,2) (n22) {};
		\node at (2,1) (n21) {};

		\node at (3,5) (n35) {};
		\node at (3,4) (n34) {};
		\node at (3,3) (n33) {};
		\node at (3,2) (n32) {};
		\node at (3,1) (n31) {};
		
        \node at (4,5) (n45) {};		
		\node at (4,4) (n44) {};
		\node at (4,3) (n43) {};
		\node at (4,2) (n42) {};
		\node at (4,1) (n41) {};

		\node at (5,5) (n55) {};		
		\node at (5,4) (n54) {};
		\node at (5,3) (n53) {};
		\node at (5,2) (n52) {};
		\node at (5,1) (n51) {};
		
		\node at (6,5) (n65) {};		
		\node at (6,4) (n64) {};
		\node at (6,3) (n63) {};
		\node at (6,2) (n62) {};
		\node at (6,1) (n61) {};

		\node at (7,5) (n75) {};		
		\node at (7,4) (n74) {};
		\node at (7,3) (n73) {};
		\node at (7,2) (n72) {};
		\node at (7,1) (n71) {};

		\node at (8,5) (n85) {};		
		\node at (8,4) (n84) {};
		\node at (8,3) (n83) {};
		\node at (8,2) (n82) {};
		\node at (8,1) (n81) {};
		%Ninth Column
		
        \node at (9,5) (n95) {};		
		\node at (9,4) (n94) {};
		\node at (9,3) (n93) {};
		\node at (9,2) (n92) {};
		\node at (9,1) (n91) {};
		
		%Tenth Column	
		
		\node at (10,5) (n105) {};		
		\node at (10,4) (n104) {};
		\node at (10,3) (n103) {};
		\node at (10,2) (n102) {};
		\node at (10,1) (n101) {};	
		
		%Eleventh Column
		
		\node[roundnode] at (11,5) (n115) {};
		\node[roundnode] at (11,4) (n114) {};
		\node[roundnode] at (11,3) (n113) {};
		\node[roundnode] at (11,2) (n112) {};
		\node[roundnode] at (11,1) (n111) {};
		
		%Draw Lines connecting the nodes
		\draw[very thick] (n15) -- (n24.center);
		\draw[very thick] (n14) -- (n25.center);
		\draw[very thick] (n13) -- (n23.center);
		\draw[very thick] (n12) -- (n42.center);
        \draw[very thick] (n11) -- (n71.center);		
        
        \draw[very thick] (n25.center) -- (n35.center);
        \draw[very thick] (n24.center) -- (n33.center);
        \draw[very thick] (n23.center) -- (n34.center);
        %\draw[very thick] (n22.center) -- (n33.center);
        %\draw[very thick] (n21.center) -- (n51.center);
        
        \draw[very thick] (n35.center) -- (n44.center);
        \draw[very thick] (n34.center) -- (n45.center);
        \draw[very thick] (n33.center) -- (n43.center);
        %\draw[very thick] (n32.center) -- (n52.center);
        %\draw[very thick] (n21.center) -- (n51.center);
		
        \draw[very thick] (n45.center) -- (n55.center);
        \draw[very thick] (n44.center) -- (n54.center);
        \draw[very thick] (n43.center) -- (n52.center);
        \draw[very thick] (n42.center) -- (n53.center);
    %    \draw[very thick] (n21.center) -- (n51.center);		

        \draw[very thick] (n55.center) -- (n65.center);
        
        \draw[very thick] (n54.center) -- (n63.center);
        
        \draw[very thick] (n53.center) -- (n64.center);	
        
        \draw[very thick] (n52.center) -- (n72.center);

        \draw[very thick] (n65.center) -- (n74.center);
        
        \draw[very thick] (n64.center) -- (n75.center);
        
        \draw[very thick] (n63.center) -- (n83.center);			
        
        \draw[very thick] (n62.center) -- (n72.center);

        \draw[very thick] (n75.center) -- (n105.center);
        
        \draw[very thick] (n74.center) -- (n94.center);	
        
        \draw[very thick] (n73.center) -- (n83.center);	        
        				
		\draw[very thick] (n72.center) -- (n81.center);	
		
		\draw[very thick] (n71.center) -- (n82.center);

	    \draw[very thick] (n83.center) -- (n92.center);
	    
	    \draw[very thick] (n82.center) -- (n93.center);

	    \draw[very thick] (n94.center) -- (n103.center);
	    
	    \draw[very thick] (n93.center) -- (n104.center);	
	    
	    \draw[very thick] (n92.center) -- (n112.center);
	    
	    \draw[very thick] (n81.center) -- (n111.center);

	    \draw[very thick] (n103.center) -- (n113.center);	
	    
	    \draw[very thick] (n104.center) -- (n115.center);	
	    
	    \draw[very thick] (n105.center) -- (n114.center);

	\end{scope}
\end{tikzpicture}	}
\label{diagrampermutation54321}
\caption{The permutation $\eta \in S_5$.}
\end{figure}

In the notations of cycles $\eta = (15)(24)(3)$. Therefore, we have $\ell = 10$, $b = |\Block(\eta)| = 0$, $c = \nc(\eta) = 3$. Moreover,
$$ \Pi^{-1} [\{ \eta \}] = \bigg\{ \frac{\pm 1 \pm \hat a_2\hat a_3
\pm \hat a_1\hat a_4 \pm \hat a_1 \hat a_2\hat a_3\hat a_4}{2},
\frac{\pm \hat a_1 \pm \hat a_1\hat a_2\hat a_3 \pm \hat a_4 \pm \hat a_2\hat a_3\hat a_4}{2},$$
$$\frac{\pm \hat a_1 \hat a_2 \pm \hat a_1 \hat a_3 \pm \hat a_2 \hat a_4
\pm \hat a_1 \hat a_4}{2}, \frac{ \pm \hat a_2 \pm \hat a_3 \pm \hat a_1  \hat a_2 \hat a_4 \pm \hat a_1  \hat a_3 \hat a_4}{2} \bigg \}$$
where we must take an even number of `$-$' signs.
Therefore, $\Pi^{-1}[\{\eta\}]$ contains
$4$ elements with $\Re(z) = \frac12$,
$4$ elements with $\Re(z) = -\frac12$ and
$24$ elements with $\Re(z) = 0$ (so that $|\Pi^{-1}[\{\eta\}]| = 32$).
From Equation~\ref{theo:N}, we have
$N(z) = 32 + 16 \Re(z)$.
It turns out that, for $z \in \Pi^{-1}[\{\eta\}]$,
$c_{\anti}(z) = 1$ if and only if $\Re(z) = 0$.
Therefore from the Remark~\ref{remark_orbits}, the set $\Pi^{-1}[\{\eta\}]$ has orbits of size $8$ and $4$.
If $\Re(z)=0$ then the orbit $\cO_z$ has cardinality $2^{n-c+2}=2^{4-3+2}=8$,
and if $\Re(z)\neq0$ then the orbit $\cO_z$ has cardinality $2^{n-c+1}=2^{4-3+1}=4$.
The action of $\cE_4$ splits the set $\acute\eta\Quat_{n+1}$
into $5$ orbits, of sizes $8, 4, 4, 8, 8$, shown below.
% In the expressions in the Clifford algebra,
%we must always have an even number of `$-$' signs.
%. 
\begin{align*}
\cO_{\acute\eta} &= \left\{
\frac{\pm \hat a_1 \pm \hat a_1\hat a_2\hat a_3
\pm \hat a_4 \pm \hat a_2\hat a_3\hat a_4}{2} 
\right\},
\quad N(z) = 32,
\quad N_{\thin}(z) = 2,  \\
\cO_{\hat a_1\acute\eta} &= \left\{
\frac{1 \pm \hat a_2\hat a_3
\pm \hat a_1\hat a_4 \pm \hat a_1\hat a_2\hat a_3\hat a_4}{2}
\right\}, \quad N(z) = 40, 
\quad N_{\thin}(z) = 0,  \\
\cO_{-\hat a_1\acute\eta} &= \left\{
\frac{-1 \pm \hat a_2\hat a_3
\pm \hat a_1\hat a_4 \pm \hat a_1\hat a_2\hat a_3\hat a_4}{2}
\right\}, \quad N(z) = 24,
\quad N_{\thin}(z) = 0,  \\
\cO_{\hat a_2 \acute\eta} &= \left\{
\frac{\pm \hat a_1 \hat a_2 \pm \hat a_1 \hat a_3
\pm \hat a_2\hat a_4 \pm \hat a_1\hat a_4}{2}
\right\}, \quad N(z) = 32, 
\quad N_{\thin}(z) = 0,  \\
\cO_{\hat a_1\hat a_2\acute\eta} &= \left\{
\frac{\pm \hat a_2 \pm \hat a_3
\pm \hat a_1\hat a_2\hat a_4 \pm \hat a_1\hat a_3\hat a_4}{2}
\right\}, \quad N(z) = 32,
\quad N_{\thin}(z) = 0.  
\end{align*}

%
%The action of $\cE$ splits the set $\acute\eta\Quat_{n+1}$
%into $5$ orbits. In the expressions in the Clifford algebra,
%we must always have an even number of `$-$' signs.
%
%\begin{figure}[ht]
%\begin{center}
%\includegraphics[scale=0.35]{54321x.pdf}
%\end{center}
%\caption{The stratification of $\BL_{-\hat a_1\acute\eta}$.}
%\label{fig:54321x}
%\end{figure}

In order to count connected components
and obtain further information about the topology
of the sets $\BL_z$, $z \in \acute\eta\Quat_{n+1}$,
we can pick one representative from each orbit
and draw the strata.
In~\cite{Alves-Saldanha}, we already constructed the CW complex for $z = - \hat{a}_1 \acute{\eta} = - \acute{\eta}\hat{a}_4$ 
(see~\cite{Alves-Saldanha} to more details about this construction).
 $\BL_{-\hat{a}_1 \acute{\eta}}$ is homotopically equivalent to the disjoint union of two points.
In other words, the two connected components 
of $\BL_{-\hat a_1\acute\eta}$ are contractible.
Therefore the orbit $\cO_{-\hat{a}_1 \acute{\eta}}$ contributes with $8$ connected components to $\BL_{\eta}$.

%\begin{figure}[ht]
%\begin{center}
%\includegraphics[scale=0.35]{54321x.pdf}
%\end{center}
%\caption{The stratification of $\BL_{-\hat a_1\acute\eta}$.}
%\label{fig:54321x}
%\end{figure}

Now we want to explore the decomposition into strata of $\BL_{\acute{\eta}}$.
Here instead of $+ 1$, $-1$, $+2$ and $- 2$ we use $\smawc$, $\smabc$, $\medwd$ and $\medbd$, respectively. 
Notice the similarity with diagrams.
For example, we use $(\medbd \smawc \medwd \medbd \smawc \smawc \smabc \medwd \smawc \smabc)$
instead of $(-2, +1, +2, -2, +1, +1, -1, +2, +1, -1)$.
A computations shows that $\BL_{\acute{\eta}}$ has: $32$ strata of dimension $0$ (two of them are thin and obviously contractible), $48$ strata of dimension $1$, $22$ strata of dimension $2$, $3$ strata of dimension $3$ and no strata of higher dimension. There are exactly three ancestries of dimension $3$:
%\begin{gather*}
%(-2,-2,-2,-1,+2,+2,-1,+1,+2,-1), \\
%(-1,-2,-2,-2,+1,+2,-1,+2,+2,-1), \\
%(-1,-2,-2,-1,-2,+2,-1,-1,+2,+2).
%\end{gather*}

\begin{gather*}
(\medbd \medbd \medbd \smabc \medwd \medwd \smabc \smawc \medwd \smabc), \\
(\smabc \medbd \medbd \medbd \smawc \medwd \smabc \medwd \medwd \smabc), \\
(\smabc \medbd \medbd \smabc \medbd \medwd \smabc \smabc \medwd \medwd).
\end{gather*}

In what follows we constructed the CW complex of the ancestry \\
$(\medbd \medbd \medbd \smabc \medwd \medwd \smabc \smawc \medwd \smabc)$. 
A sketch of these $3$-dimension cell appears in the Figure \ref{54321_dimension3_dot}; and in the
Figure \ref{54321_flattening} we represent a flattening of these cell.
Note that the outer hexagon in the Figure \ref{54321_flattening} correspond to the $2$-dimension cell 
with ancestry $(\smawc \medbd \medbd \smawc \smabc \medwd \smabc \smawc \medwd \smabc)$.

The construction of the other two cells of dimension $3$ are similar.
These three cells of dimension $3$ have in common the $2$-dimension cell with ancestry 
$(\smabc \medbd \medbd \smabc \smawc \medwd \smabc \smawc \medwd \smabc)$. 
In this hexagon we glue, in the obvious way, the three cells of dimension $3$. 
Therefore this connected component is contractible.

\begin{figure}[ht]
\begin{center}
\includegraphics[scale=0.4]{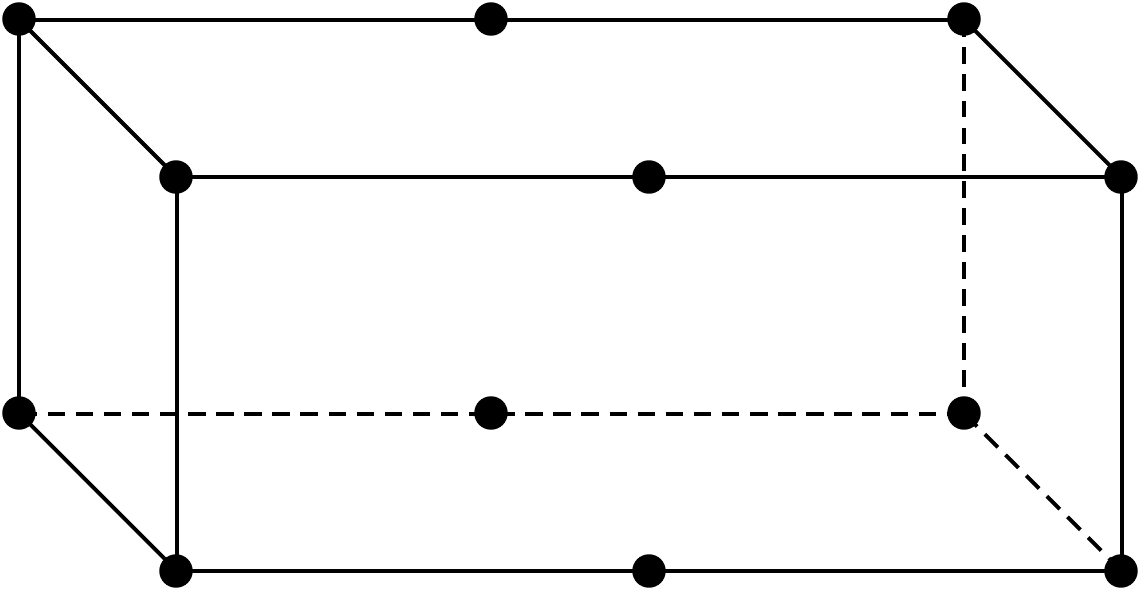}
\caption{A sketch of one $3$-dimensional cell in $\BL_{\acute{\eta}}$.}
\label{54321_dimension3_dot}
\end{center}

\end{figure}

%%%%%%%%%%%%%%%%%%%%%%%%%%%%%%%%%%%%%%%
%%%%%%%%%%%%%%%%%%%%%%%%%%%%%%%%%%%%%%
%%%%%%%%%%%%%%%%%%%%%%%%%%%%%%%%%%%% Second Page 54321
\begin{figure}[h!]


\centering
 \resizebox{0.6\textwidth}{!}{
% \resizebox{\textwidth}{!}{

% [inline block 2: 1 envs, 65422 chars -> data_tex | \begin{tikzpicture}[roundnode/.style={circle, draw=black, fill=black, minimum size=0.1mm,inner sep=2pt}, squarednode/.st...]
	}
\caption{ A flattening of the cell above.}
\label{54321_flattening}
\end{figure}

%\begin{remark}
%We did the construction of the CW complex of one representative of the orbits 
%$\cO_{-\hat{a}_1 \acute{\eta}}$ and $\cO_{\acute{\eta}}$.

We did the construction of the CW complex of one representative of the orbits 
$\cO_{-\hat{a}_1 \acute{\eta}}$ (see~\cite{Alves-Saldanha}) and $\cO_{\acute{\eta}}$.
The construction of the CW complex for one representative of the orbits 
$\cO_{\hat{a}_2 \acute{\eta}}$ and $\cO_{\hat{a}_1 \hat{a}_2\acute{\eta}}$ 
is similar and simpler to the construction above and left to the reader.
%It follows from the fact that top-dimension cells have dimension $3$.
%Therefore each of theses orbits contributes with $8$ connected components to $\BL_{\eta}$ 
%(since $8 = 8 \times 1$, where $8$ is the size of the orbit multiplied by $1$ which is the number of connected components in each orbit).  

Finally we will discuss the case of $\BL_{\hat{a}_1\acute{\eta}}$.
This orbit has 
$40$ strata of dimension $0$, 
$72$ strata of dimension $1$,
$42$ strata of dimension $2$,
$10$ strata of dimension $3$ and
$1$ stratum of dimension $4$. 
The CW complex associated with $\BL_{\hat a_1 \eta}$ is homotopically equivalent to a $4$-dimensional disk, $\DD^4$.
As a consequence it is connected and contractible. 
We briefly describe the construction of such CW complex in what follows.
First, a solid torus arises from the glueing of $6$ out of the $10$ cells of dimension $3$.
And then two of the $3$-cells fill in the boundary of the solid torus, 
which leads to a CW complex homotopically equivalent to $\Ss^2$.
The remaining $3$-dimensional cells fill in the boundary of $\Ss^2$, yielding $\Ss^3$.
Finally, the $4$-dimensional cell attaches to the previous construction leading to $\DD^4$.

Because a graphical representation of this construction is too complicated, we exhibit next 
a sequence of collapses starting with the initial CW complex for $\BL_{\hat a_1 \eta}$ 
and ending with a CW complex which is homeomorphic to a disk of dimension $2$, shown in Figure~\ref{fig:54321x}.
Each elementary collapse is described by a pair of cells of adjacent dimensions. 
The reader will then have difficulty obtaining a (rather long!) sequence of elementary collapses ending with a point.

%We know that the thick parts are otherwise connected.
%This is sufficient to reproduce the counting of $52$ components.

%Note to authors: we should complete the verification
%that all connected components of $\BL_\eta$ are contractible.
%\end{remark}

%\[(\medbd \medbd \medbd \medbd \smabc \medwd \smawc \medwd \medwd \medwd,
%\medbd \medbd \smawc \medbd \smawc \smabc \smawc \medwd \medwd \medwd) \]
%1
\[ (\medbd \medbd \medbd \medbd \smabc \medwd \smawc \medwd \medwd \medwd,\medbd \medbd \smawc \medbd \smawc \smabc \smawc \medwd \medwd \medwd) \]
%2
\[ (\smabc \medbd \medbd \medbd \smabc \medwd \smawc \medwd \medwd \smawc,\smabc \medbd \smawc \medbd \smawc \smabc \smawc \medwd \medwd \smawc) \]
%3
\[ (\medbd \medbd \smabc \medbd \smabc \smawc \smawc \medwd \medwd \medwd,\smabc \medbd \smabc \medbd \smabc \smawc \smawc \medwd \medwd \smawc) \]
%4
\[ (\medbd \smawc \medwd \medbd \smawc \medbd \smabc \medwd \smabc \medwd,\medbd \smawc \medwd \smabc \smawc \medbd \smabc \smawc \smabc \medwd) \]
%5
\[ (\medbd \smawc \medwd \medbd \smawc \smawc \smabc \medwd \smawc \smabc,\medbd \smawc \medwd \smabc \smawc \smawc \smabc \smawc \smawc \smabc) \]
%6
\[ (\smabc \smawc \smawc \medbd \smawc \medbd \smabc \medwd \smabc \medwd,\smabc \smawc \smawc \medbd \smawc \smawc \smabc \medwd \smawc \smabc) \]
%7
\[ (\smabc \smawc \smawc \smabc \smawc \medbd \smabc \smawc \smabc \medwd,\smabc \smawc \smawc \smabc \smawc \smawc \smabc \smawc \smawc \smabc)\]
%8
\[ (\medbd \smabc \medwd \medbd \smawc \medbd \smawc \medwd \smawc \medwd,\medbd \smabc \medwd \smabc \smawc \medbd \smawc \smawc \smawc \medwd) \]
%9
\[ (\smawc \smawc \smabc \medbd \smawc \medbd \smawc \medwd \smawc \medwd,\smawc \smawc \smabc \smabc \smawc \medbd \smawc \smawc \smawc \medwd) \]
%10
\[ (\medbd \smabc \medwd \medbd \smawc \smabc \smawc \medwd \smawc \smawc,\smawc \smawc \smabc \medbd \smawc \smabc \smawc \medwd \smawc \smawc) \]
%11
\[ (\medbd \smabc \medwd \smabc \smawc \smabc \smawc \smawc \smawc \smawc,\smawc \smawc \smabc \smabc \smawc \smabc \smawc \smawc \smawc \smawc) \]
%At this point we have closed ball of dimension $3$ formed by $5$ cells of dimension $3$ glued in a row. 
We now proceed to remove all remaining cells of dimension $3$.
%12
\[ (\medbd \medbd \medbd \smawc \medwd \medwd \smabc \smabc \medwd \smawc,\smabc \medbd \medbd \smawc \smawc \medwd \smabc \smabc \medwd \smawc) \]
%13
\[ (\smawc \medbd \medbd \smabc \medbd \medwd \smabc \smabc \medwd \medwd,\smawc \medbd \medbd \smabc \smabc \medwd \smabc \smabc \medwd \smawc) \]
%14
\[ (\smawc \medbd \medbd \medbd \smawc \medwd \smabc \medwd \medwd \smabc,\smawc \medbd \medbd \smabc \smawc \medwd \smabc \smawc \medwd \smabc) \]
%15
\[ (\medbd \medbd \medbd \smabc \medwd \medwd \smawc \smabc \medwd \smabc,\smawc \medbd \medbd \smawc \smabc \medwd \smawc \smabc \medwd \smabc) \]
%16
\[ (\smabc \medbd \medbd \smabc \medbd \medwd \smawc \smawc \medwd \medwd,\smabc \medbd \medbd \smabc \smawc \medwd \smawc \smabc \medwd \smabc) \]
At this stage we have a CW complex of dimension $2$.
%We then proceed to remove the faces of dimension $2$.

\begin{figure}[p]
\begin{center}
\includegraphics[scale=0.25]{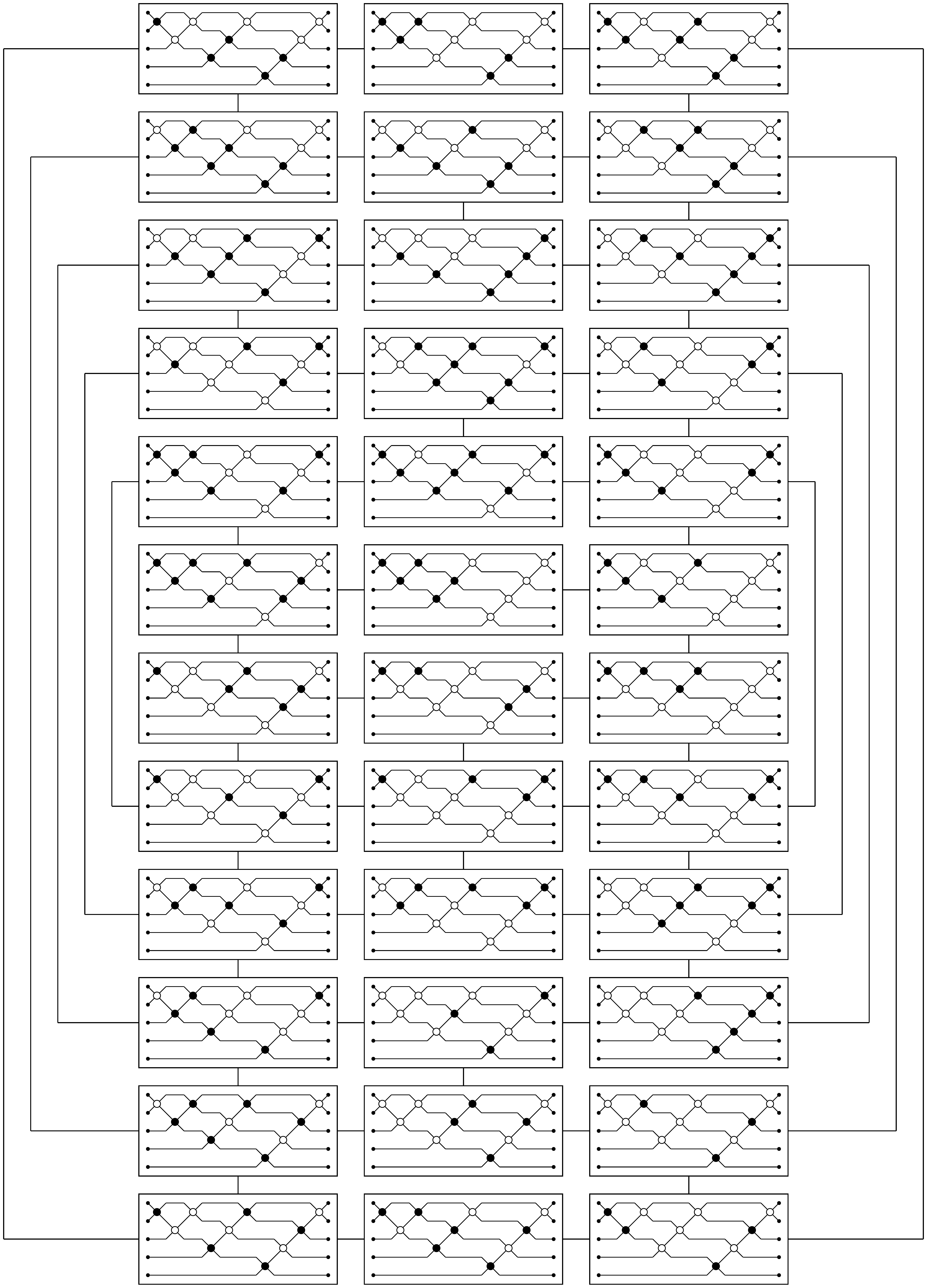}
\end{center}
\caption{The stratification of a CW complex homotopically equivalent to $\BL_{-\hat a_1\acute\eta}$, obtained after a sequence of collapses. 
This CW complex is homeomorphic to a disk of dimension $2$.}
\label{fig:54321x}
\end{figure}

%17
\[ (\smabc \smabc \medbd \medbd \smabc \medwd \smawc \medwd \smawc \smawc,\smabc \smabc \smabc \medbd \smabc \smawc \smawc \medwd \smawc \smawc) \]
%18
\[ (\smabc \smawc \medbd \medbd \smabc \medwd \smabc \medwd \smabc \smawc,\smabc \smawc \smabc \medbd \smabc \smawc \smabc \medwd \smabc \smawc) \] 
%19
\[ (\medbd \smawc \medwd \medbd \smawc \smabc \smabc \medwd \smabc \smawc,\smabc \smawc \smawc \medbd \smawc \smabc \smabc \medwd \smabc \smawc) \]
%20
\[ (\smawc \smabc \smabc \medbd \smawc \medbd \smabc \medwd \smabc \medwd,\smawc \smabc \smabc \medbd \smawc \smabc \smabc \medwd \smabc \smawc) \]
%21
\[ (\medbd \smawc \medwd \smawc \smabc \medbd \smawc \smabc \smabc \medwd,\medbd \smawc \medwd \smawc \smabc \smabc \smawc \smabc \smabc \smawc) \]
%22
\[ (\smawc \smabc \smabc \smawc \smabc \medbd \smawc \smabc \smabc \medwd,\smawc \smabc \smabc \smawc \smabc \smabc \smawc \smabc \smabc \smawc) \]
%23
\[ (\smabc \smabc \smawc \medbd \smawc \medbd \smawc \medwd \smawc \medwd,\smabc \smabc \smawc \medbd \smawc \smabc \smawc \medwd \smawc \smawc) \] 
%24
\[ (\medbd \smabc \medwd \medbd \smawc \smawc \smawc \medwd \smabc \smabc,\smabc \smabc \smawc \medbd \smawc \smawc \smawc \medwd \smabc \smabc) \]
%25
\[ (\medbd \smabc \medwd \smawc \smabc \medbd \smabc \smabc \smawc \medwd,\smabc \smabc \smawc \smawc \smabc \medbd \smabc \smabc \smawc \medwd) \]
%26
\[ (\medbd \smabc \medwd \smawc \smabc \smawc \smabc \smabc \smabc \smabc,\smabc \smabc \smawc \smawc \smabc \smawc \smabc \smabc \smabc \smabc)  \]

At this point, the CW complex is a disk of dimension $2$, as shown in Figure~\ref{fig:54321x}.
The reader is invited to complete the very long sequence of collapses.
This sequence of elementary collapses ends with a single vertex, as desired. 
This completes the proof that all $52$ connected components of $\BL_{\eta}$ are contractible.
\end{example}

\bigskip

\end{document}